\documentclass{amsart}
\usepackage{geometry,color}                
\geometry{a4paper}                   
\usepackage{amssymb}


\title{Nonlinear BL}



\newcommand{\R}{\mathbb{R}}

\newcommand{\BL}{\operatorname{BL}}

\newcommand{\linearmapj}{L_j}
\newcommand{\linearfamily}{\mathbf{L}}

\newcommand\bl{{\operatorname{BL}}}

\newtheorem{theorem}{Theorem}[section]
\newtheorem{lemma}[theorem]{Lemma}
\newtheorem{prop}[theorem]{Proposition}

\newtheorem{corollary}[theorem]{Corollary}
\newtheorem{definition}[theorem]{Definition}

\theoremstyle{remark}
\newtheorem*{remark*}{Remark}

\newtheorem*{question*}{Question}
\newtheorem*{notation}{Notation}

\textwidth 16cm \textheight 23cm \oddsidemargin 0.1cm
\evensidemargin 0.1cm \topmargin -0.8cm
\parindent = 5 pt
\parskip = 8 pt
%
\title[The Nonlinear Brascamp--Lieb inequality for simple data]{The Nonlinear Brascamp--Lieb inequality for simple data}
\author{Jonathan Bennett, Neal Bez, Stefan Buschenhenke and Taryn C. Flock}
\thanks{This work was supported by the European Research Council [grant
number 307617] (Bennett, Buschenhenke, Flock), JSPS Grant-in-Aid for Young Scientists A [grant number 16H05995] and JSPS Grant-in-Aid for Challenging Exploratory Research [grant number 16K13771-01] (Bez)}
\address{Jonathan Bennett and Stefan Buschenhenke: School of Mathematics, The Watson Building, University of Birmingham, Edgbaston,
Birmingham, B15 2TT, England.}
\email{J.Bennett@bham.ac.uk; S.Buschenhenke@bham.ac.uk}
\address{Neal Bez: Department of Mathematics, Graduate School of Science and Engineering,
Saitama University, Saitama 338-8570, Japan.}
\email{nealbez@mail.saitama-u.ac.jp}
\address{Taryn C. Flock: Department of Mathematics and Statistics,
 Lederle Graduate Research Tower,
 University of Massachusetts,
 710 N. Pleasant Street,
 Amherst, MA 01003-9305, USA.}
\email{flock@math.umass.edu}

\begin{document}
\begin{abstract}
We establish a nonlinear generalisation of the classical Brascamp--Lieb inequality in the case where the Lebesgue exponents lie in the interior of the finiteness polytope. As a corollary we show that the best constant in Young's convolution inequality in a small neighbourhood of the identity of a general Lie group, approaches the euclidean constant as the size of the neighbourhood approaches zero, answering a question of Cowling, Martini, M\"uller and Parcet. Our proof consists of running an efficient, or ``tight", induction on scales argument which uses the existence of gaussian extremisers to the underlying linear Brascamp--Lieb inequality in a fundamental way.
\end{abstract}
\maketitle
\section{Introduction}
The classical Brascamp--Lieb inequality is a far-reaching common generalisation of many well-known inequalities in analysis, including the multilinear H\"older, Young convolution and Loomis--Whitney inequalities. It takes the form
\begin{equation}\label{BL}
\int_{\mathbb{R}^n}\prod_{j=1}^m (f_j\circ L_j)^{p_j}\leq C\prod_{j=1}^m\left(\int_{\mathbb{R}^{n_j}}f_j\right)^{p_j},
\end{equation}
where for each $1\leq j\leq m$, the map $L_j:\mathbb{R}^n\rightarrow\mathbb{R}^{n_j}$ is a linear surjection (thus $n_j\leq n$), the exponent $p_j\in [0,1]$, the function $f_j$ is nonnegative and integrable, and all integration is with respect to Lebesgue measure. We refer to $m$ as the \emph{level of multilinearity} of the Brascamp--Lieb inequality since \eqref{BL} is manifestly equivalent to the multilinear form bound
\begin{equation}\label{BLform}
\Bigl|\int_{\mathbb{R}^n}\prod_{j=1}^m f_j\circ L_j\Bigr|\leq C\prod_{j=1}^m\|f_j\|_{L^{q_j}(\mathbb{R}^{n_j})}
\end{equation}
where $q_j=1/p_j\in [1,\infty]$. As is by now quite standard, we denote by $\linearfamily$ and $\mathbf{p}$ the $m$-tuples $(\linearmapj)_{j=1}^m$ and $(p_j)_{j=1}^m$ respectively, and refer to  $(\linearfamily,\mathbf{p})$ as the \textit{Brascamp--Lieb datum}. We denote by $\mbox{BL}(\linearfamily,\mathbf{p})$ the smallest constant $C$ in \eqref{BL} over all input functions $\mathbf{f}=(f_j)_{j=1}^m$, and refer to it as the \textit{Brascamp--Lieb constant}. Of course $\mbox{BL}(\linearfamily,\mathbf{p})$ may be infinite, as is easily seen to be the case, for instance, if $$
\bigcap_{j=1}^m \ker L_j
$$
is nontrivial, or the scaling condition
\begin{equation}\label{scal}
n=\sum_{j=1}^m p_jn_j
\end{equation}
is violated.

It was shown in \cite{BCCT1} (see \cite{Ba2} and \cite{CLL} in the case of rank-$1$ maps $L_j$) that $\mbox{BL}(\linearfamily,\mathbf{p})<\infty$ if and only if both the scaling condition
\eqref{scal} and
$$
\dim (V)\leq\sum_{j=1}^m p_j\dim(L_jV)
$$
hold for all subspaces $V$ of $\mathbb{R}^n$. Since all dimensions here are integer-valued, this finiteness characterisation consists of finitely many linear inequalities in the exponents $p_j$. This gives rise to the \emph{finiteness polytope} $$\Pi(\mathbf{L})=\{\mathbf{p}:\mbox{BL}(\linearfamily,\mathbf{p})<\infty\}$$ lying inside the scaling hyperplane \eqref{scal}. Following \cite{BCCT1}, we refer to a Brascamp--Lieb datum $(\linearfamily,\mathbf{p})$ as \emph{simple} if
$\mathbf{p}$ lies on the interior of $\Pi(\mathbf{L})$.

A well-known example is the classical sharp Young convolution inequality of Beckner \cite{Beckner}, \cite{Beckner2} and Brascamp--Lieb \cite{BL}, which may be stated as
\begin{equation}\label{Young}
\int_{\mathbb{R}^d\times\mathbb{R}^d}f_1(y)^{p_1}f_2(x-y)^{p_2}f_3(x)^{p_3}dydx\leq (C_{p_1}C_{p_2}C_{p_3})^d\Bigl(\int_{\mathbb{R}^d}f_1\Bigr)^{p_1}\Bigl(\int_{\mathbb{R}^d}f_2\Bigr)^{p_2}\Bigl(\int_{\mathbb{R}^d}f_3\Bigr)^{p_3},
\end{equation}
where $p_1,p_2,p_3\in [0,1]$, ${p_1}+{p_2}+{p_3}=2$, and $C_r=(1-r)^{1-r}/r^r$. In this case the extreme points of the finiteness polytope $\mathbf{p}=(1,1,0), (1, 0, 1), (0,1,1)$, are elementary consequences of Fubini's theorem. In the case of simple data, which arises when $p_1,p_2,p_3\in (0,1)$, the constant $(C_{p_1}C_{p_2}C_{p_3})^d<1$, and is attained by suitably scaled isotropic centred gaussian inputs $\mathbf{f}$.

In \cite{L} Lieb showed that $\mbox{BL}(\linearfamily,\mathbf{p})$ is exhausted by centred gaussian inputs; that is, functions of the form
\begin{equation}\label{gau}
f_j(x)=c_j\exp(-\pi\langle A_jx,x\rangle),
\end{equation}
where $A_j$ is a positive-definite $n_j\times n_j$ matrix, and $c_j$ a positive real number. A full characterisation of the data $(\linearfamily,\mathbf{p})$ for which the Brascamp--Lieb inequality \eqref{BL} has \emph{gaussian extremisers} appeared in \cite{BCCT1}, following the case of rank-1 maps $L_j$ in \cite{Ba2} (see also \cite{CLL}).
In particular, it was shown in \cite{BCCT1} that if a datum $(\linearfamily,\mathbf{p})$ is simple, then there are unique gaussian extremisers, up to the natural isotropic scale-invariance of the inequality.\footnote{By the scale invariance of \eqref{BL} (under the necessary condition \eqref{scal}), it is easily seen that if $\mathbf{f}=(f_j)_{j=1}^m$ is an extremiser, then $(c_jf_j(\lambda\cdot))_{j=1}^m$ is an extremiser for all choices of $c_1,\hdots, c_m,\lambda>0$. This scale-invariance will be important to us later.} This feature of simple Brascamp--Lieb data will play an important role in this paper.

Recently, beginning with \cite{BCW}, certain \emph{nonlinear} generalisations of the Brascamp--Lieb inequality have come to the fore in harmonic analysis and dispersive PDE. The idea is to replace the linear surjections $L_j$ by smooth submersions $B_j$ defined in a neighbourhood of a point $x_0\in\mathbb{R}^n$, and ask in what sense the inequality can persist. In what follows we write $\mathbf{B}=(B_j)_{j=1}^m$ and $\mbox{d}\mathbf{B}(x_0)=(dB_j(x_0))_{j=1}^m$ for $x_0\in\mathbb{R}^n$. Our main theorem is the following:

\begin{theorem}\label{mainthm} Let $\varepsilon>0$.
If the datum $(\linearfamily,\mathbf{p})$ is simple, and $\mathbf{B}$ is a family of $C^2$ submersions in a neighbourhood of a point $x_0\in\mathbb{R}^n$ for which $dB_j(x_0)=L_j$ for each $1\leq j\leq m$, then there exists a neighbourhood $U$ of $x_0$ for which
$$
\int_U\prod_{j=1}^m(f_j\circ B_j)^{p_j}\leq (1+\varepsilon)\bl(\linearfamily,\mathbf{p})\prod_{j=1}^m\left(\int_{\mathbb{R}^{n_j}}f_j\right)^{p_j}.
$$
\end{theorem}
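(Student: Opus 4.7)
The plan is to run an induction on scales, arranged so as to be \emph{tight}, i.e.\ with accumulated constant at the end of the iteration equal to $(1+\varepsilon)\bl(\linearfamily,\mathbf{p})$ rather than something worse. First I would exploit the scaling invariance arising from \eqref{scal}: the isotropic rescaling $f_j\mapsto f_j(\lambda\cdot)$ preserves both sides of \eqref{BL}. Combined with translation this allows me to pass from $B(x_0,r)$ with submersions $\nonlinearfamily$ to $B(0,1)$ with rescaled submersions $B_j^{(r)}(u)=r^{-1}(B_j(x_0+ru)-B_j(x_0))$, for which $dB_j^{(r)}(0)=L_j$ and $\|B_j^{(r)}-L_j\|_{C^2(B(0,1))}=O(r)$. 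The theorem therefore reduces to: for every $\varepsilon>0$ there is $\delta_0>0$ such that whenever $\|B_j-L_j\|_{C^2(B(0,1))}\le\delta_0$ for all $j$ one has $\bl(\nonlinearfamily,\mathbf{p},B(0,1))\le(1+\varepsilon)\bl(\linearfamily,\mathbf{p})$.

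Next I would iterate a local-to-global step: cover $B(0,1)$ by balls $B(y_i,1/K)$ with $K$ large, apply the (nonlinear) BL inequality on each small ball, and reassemble. Rescaling each small ball back to $B(0,1)$ via $u\mapsto y_i+u/K$ contracts the $C^2$-deviation of the submersions from their linearizations at $y_i$ by a factor of $K$, so the nonlinearity parameter shrinks along the iteration. Simultaneously, the continuity of the map $\linearfamily\mapsto\bl(\linearfamily,\mathbf{p})$ at a simple datum, established in \cite{BCCT1}, keeps $\bl((dB_j(y_i))_j,\mathbf{p})$ within $o(1)$ of $\bl(\linearfamily,\mathbf{p})$ uniformly for $y_i\in B(0,1)$ and for $\nonlinearfamily$ $C^2$-close to $\linearfamily$. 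However, naive reassembly of the local estimates loses a combinatorial factor $K^{\alpha}$ with $\alpha=n(\sum_j p_j-1)\ge 0$, which is strictly positive when $\sum_j p_j>1$ (as for Loomis--Whitney-type data); so a direct iteration will only yield a non-tight bound.

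The heart of the argument, and the step I expect to be the main obstacle, is making this local-to-global assembly \emph{tight}. Here the hypothesis that $(\linearfamily,\mathbf{p})$ is simple is decisive: by \cite{BCCT1} there exist gaussian extremisers $g_j^{*}(y)=\exp(-\pi\langle A_j^{*} y,y\rangle)$ for the linear datum, unique up to the common scaling $A_j^{*}\mapsto\lambda A_j^{*}$. My plan is to use $(g_j^{*})_{j=1}^m$ to calibrate the induction: on each small ball $B(y_i,1/K)$ one tests the local BL inequality against translated/rescaled gaussian profiles extracted from $(g_j^{*})_{j=1}^m$, so that the linear BL inequality applied locally is saturated up to the $O(1/K)$ nonlinearity error, and the global reconstruction integrates exactly over the fibres rather than losing a combinatorial factor. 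Equivalently, one first verifies the bound on gaussian inputs via a saddle-point computation that exploits the uniqueness of $(A_j^{*})_{j=1}^m$ as a critical point of the gaussian BL functional, and then propagates to general inputs through Lieb's exhaustion theorem \cite{L} applied at each linearised datum appearing along the iteration.

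Chaining the tight step across scales gives $\bl(\nonlinearfamily,\mathbf{p},B(0,1))\le(1+\eta(\delta_0))\bl(\linearfamily,\mathbf{p})$ with $\eta(\delta_0)\to 0$ as $\delta_0\to 0$; undoing the initial rescaling yields the statement of the theorem. The crux really is the tight summation: making it loss-free requires carrying the gaussian calibration through the scale hierarchy together with a quantitative stability version of Lieb's theorem for simple data, and it is exactly the uniqueness (up to scale) of the gaussian extremiser on simple data that prevents the induction's error from leaking.
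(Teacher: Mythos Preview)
Your high-level picture is right --- induction on scales, with the gaussian extremisers for the simple datum supplying the tightness --- but there is a genuine gap at exactly the point you flag as the crux, and the mechanisms you propose for it do not work. The tight local-to-global step is not obtained by covering $U_\delta(u)$ by a grid of balls and reassembling, nor by reducing to gaussian inputs via Lieb's theorem. It is a nonlinear version of \emph{Ball's inequality}: one inserts the identity
\[
1=\frac{1}{\BL(\mathrm{d}\mathbf{B}(u),\mathbf{p})}\int_{\mathbb{R}^n}\prod_{j}g_{u,\delta^\alpha,j}(dB_j(u)x)^{p_j}\,dx,
\]
where $\mathbf{g}_{u,\delta^\alpha}$ is the (rescaled) gaussian extremiser for $(\mathrm{d}\mathbf{B}(u),\mathbf{p})$, and applies Fubini. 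The inner integral becomes a nonlinear Brascamp--Lieb integral over $U_{\delta^\alpha}(x)$ with inputs $h_j^x(w)=f_j(w)\,g_{u,\delta^\alpha,j}(L^u_jx-w)$ --- this is the localisation --- while the outer integral is a \emph{linear} Brascamp--Lieb integral with inputs $f_j*g_{u,\delta^\alpha,j}$, which the linear inequality bounds by exactly $\BL(\mathrm{d}\mathbf{B}(u),\mathbf{p})\prod_j\int f_j$, cancelling the prefactor. The gaussian is thus used as a lossless continuous partition of unity adapted to the BL structure, not as a test input; your ``equivalently, propagate via Lieb's exhaustion theorem'' is a red herring, since Lieb's theorem concerns linear data and the argument treats general $f_j$ directly. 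The errors incurred (truncating the gaussian, replacing $dB_j(u)(x-y)$ by $B_j(x)-B_j(y)$, passing from $\mathbf{g}_u$ to $\mathbf{g}_x$) are all of size $O(\delta^\beta)$ and form a convergent product along the lacunary scales $\delta_k=\delta_0^{\alpha^k}$.

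There is a second gap: your iteration has no base case. Contracting the $C^2$-deviation by a factor of $K$ never makes it vanish, and for arbitrary integrable $f_j$ the nonlinear and linear integrals over a small ball need not be close at any finite stage. The paper handles this by working in an auxiliary class of inputs that are ``$\kappa$-constant at scale $\mu$'' (i.e.\ $f(x)\le\kappa f(y)$ when $|x-y|\le\mu$); for such inputs, once $\delta$ drops below a threshold in $\mu$ one may replace $B_j$ by its affine approximation $L^u_j$ at cost $\kappa^\sigma$, yielding the linear inequality. The local-constancy parameter $\kappa$ must be tracked through the recursion (multiplication by the truncated gaussians degrades it by $\exp(\delta^\beta)$ at each step), and general $f_j\in L^1$ are reached at the end by a density argument (mollification and Fatou), with the bound established uniformly in $\mu$.
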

We conclude this introduction with a number of remarks on the context and scope of Theorem \ref{mainthm}.

Theorem \ref{mainthm} is rather different from existing nonlinear Brascamp--Lieb inequalities, which have in all cases required some quite rigid structural conditions either on the nonlinear maps $B_j$ or  on the kernels of the underlying linear maps $L_j$. These conditions on the kernels\footnote{with the exception of some unpublished examples of Betsy Stovall} essentially amount to the basis condition
\begin{equation}\label{basis}
\bigoplus_{j=1}^m \ker L_j=\mathbb{R}^n,
\end{equation}
which keeps the complexity at the level of the classical Loomis--Whitney inequality; see \cite{BCW}, \cite{BB}, \cite{BHT}, \cite{KS}, \cite{C},
and an application to dispersive PDE in \cite{BHHT}, \cite{BH}.
However, if one is prepared to accept an $\varepsilon$-loss in the scale of Sobolev spaces, an \emph{arbitrary} Brascamp--Lieb datum admits nonlinear perturbations -- see \cite{BBFL}, where it is shown that whenever $\mbox{BL}(\mbox{d}\mathbf{B}(x_0)),\mathbf{p})<\infty$, there exists a neighbourhood $U$ of $x_0$ such that for each $\varepsilon>0$,
\begin{equation}\label{NLBLform}
\Bigl|\int_{U}\prod_{j=1}^m f_j\circ B_j\Bigr|\leq C_\varepsilon\prod_{j=1}^m\|f_j\|_{L^{q_j}_\varepsilon(\mathbb{R}^{n_j})}.
\end{equation}
Here $(\mbox{d}\mathbf{B}(x_0))=(dB_j(x_0))_{j=1}^m$ and $q_j=1/p_j\in [1,\infty]$. A very different argument of Zhang \cite{Z} (building on work of Guth \cite{Guth}, and Bourgain and Guth \cite{BG}; see also Carbery and Valdimarsson \cite{CV}) allows for \emph{polynomial} $\mathbf{B}$, but generates a bound that depends strongly on its degree.

Theorem \ref{mainthm} was previously unknown with \textit{any} finite constant in place of the $1+\varepsilon$ factor, and we do not know how to prove such a (weaker) qualitative statement without the full gaussian-based argument of this paper. We note that Carbery's recent factorisation-based proof of the nonlinear Loomis--Whitney inequality \cite{C} is also well-suited to obtaining near-optimal constants.

It is not possible to take $\varepsilon<0$ in the statement of Theorem \ref{mainthm} for \emph{any} datum $(\mathbf{L},\mathbf{p})$, as may be seen by a slight modification of the argument in Lemma 6 from \cite{BBG}; see also \cite{CMMP}. (This argument is related to the so-called ``transplantation" method -- see for example \cite{KST} and \cite{Alessio}.) As may be expected, it is not in general possible to take $\varepsilon=0$ either, as $\mbox{BL}(\mbox{d}\mathbf{B}(x),\mathbf{p})$ varies with $x$ in all but very special situations.

The simplicity hypothesis in the statement of Theorem \ref{mainthm} is natural in perturbative contexts such as this since
$
\{\mathbf{L}:(\mathbf{L},\mathbf{p})\mbox{ simple}\}
$
is an open set. This was proved by Valdimarsson in \cite{ValdBestBest}, who showed further that the Brascamp--Lieb constant $\mbox{BL}(\cdot,\mathbf{p})$ is smooth near simple data points (see also \cite{BBFL}, \cite{BBCF} and \cite{GGOW}, where the regularity of the Brascamp--Lieb constant in $\mathbf{L}$ is studied at \emph{all} data points). As one might expect, this regularity will be important in our proof of Theorem \ref{mainthm}.

Theorem \ref{mainthm} is equivalent to a superficially stronger statement where the underlying euclidean spaces $\mathbb{R}^n$, $\mathbb{R}^{n_1},\hdots,\mathbb{R}^{n_m}$ (equipped with Lebesgue measures) are replaced by general smooth manifolds $M$, $M_1,\hdots,M_m$ (equipped with smooth densities). This is a reflection of the (essential) diffeomorphism-invariance of its statement. While this is arguably the natural context for such nonlinear Brascamp--Lieb inequalities, we choose to work in the euclidean setting for the sake of concreteness.

As an application of Theorem \ref{mainthm} we provide a positive answer to a recent question of Cowling, Martini, M\"uller and Parcet \cite{CMMP} relating to the best constant in the Young convolution inequality on a general Lie group, for input functions supported close to the identity element.
\begin{corollary}\label{cor} Suppose that $p_1,p_2,p_3\in (0,1)$ are such that $p_1+p_2+p_3=2$, and that $\varepsilon>0$.
If $G$ is a Lie group with identity $e$ and topological dimension $d$, then there exists a neighbourhood $U$ of $e$ for which
\begin{equation}\label{lsy}
\int_{G\times G}f_1(y)^{p_1}f_2(y^{-1}x)^{p_2}f_3(x)^{p_3}dydx\leq (1+\varepsilon)(C_{p_1}C_{p_2}C_{p_3})^d\Bigl(\int_{G}f_1\Bigr)^{p_1}\Bigl(\int_{G}f_2\Bigr)^{p_2}\Bigl(\int_{G}f_3\Bigr)^{p_3}
\end{equation}
for all $f_1, f_2, f_3$ supported in $U$.
\end{corollary}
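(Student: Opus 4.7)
The plan is to derive Corollary \ref{cor} as a direct application of Theorem \ref{mainthm}, by transporting the problem to $\R^{2d}$ via a chart centred at the identity of $G$.

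I would first fix exponential coordinates $\phi : V \to W$ sending $0 \in V \subset \R^d$ to $e \in W \subset G$, with $d\phi(0)$ equal to the identity under the canonical identification $T_e G \cong \R^d$. In these coordinates the (left) Haar measure on $G$ takes the form $J(w)\,dw$ for some smooth positive $J$ with $J(0)=1$. Next, I would define, on a neighbourhood of the origin in $\R^{2d}$, the maps
\[
B_1(w_1,w_2) = w_2, \qquad B_2(w_1,w_2) = \phi^{-1}\bigl(\phi(w_2)^{-1}\phi(w_1)\bigr), \qquad B_3(w_1,w_2) = w_1.
\]
Each is a $C^\infty$ submersion to $\R^d$, and a first-order Baker--Campbell--Hausdorff expansion $\phi(w_2)^{-1}\phi(w_1) = \phi(w_1 - w_2 + O(|(w_1,w_2)|^2))$ gives $dB_j(0,0) = L_j$, where $L_1(v_1,v_2) = v_2$, $L_2(v_1,v_2) = v_1 - v_2$ and $L_3(v_1,v_2) = v_1$. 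The triple $\mathbf L = (L_j)_{j=1}^3$ together with the exponents $\mathbf p = (p_1,p_2,p_3)$ is precisely the classical Young datum in dimension $d$; since $p_j \in (0,1)$ and $\sum p_j = 2$, it sits strictly inside the finiteness polytope (whose vertices are $(1,1,0), (1,0,1), (0,1,1)$), hence $(\mathbf L, \mathbf p)$ is simple, and \eqref{Young} identifies $\bl(\mathbf L, \mathbf p) = (C_{p_1}C_{p_2}C_{p_3})^d$.

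Given $\varepsilon > 0$, I would fix a small $\delta > 0$ and apply Theorem \ref{mainthm} with some $\varepsilon' > 0$ (to be chosen) at $x_0 = (0,0)$, obtaining a neighbourhood $U'$ of $(0,0) \in \R^{2d}$. Then I would select a neighbourhood $U$ of $e$ in $G$ with $\phi^{-1}(U)$ small enough that $1-\delta \leq J \leq 1+\delta$ on $\phi^{-1}(U)$, that $\phi^{-1}(U)\times\phi^{-1}(U) \subset U'$, and that $B_2$ maps this product into $V$. For $f_1,f_2,f_3$ supported in $U$, setting $\tilde f_j = f_j \circ \phi$ transforms the left-hand side of \eqref{lsy} into
\[
\int_{\phi^{-1}(U)\times\phi^{-1}(U)} \tilde f_1(B_1)^{p_1}\, \tilde f_2(B_2)^{p_2}\, \tilde f_3(B_3)^{p_3}\, J(w_1)J(w_2)\, dw_1\, dw_2.
\]
Bounding $J(w_1)J(w_2) \leq (1+\delta)^2$ pointwise and applying Theorem \ref{mainthm}, followed by passing from Lebesgue to Haar on the right via $\int_{\R^d}\tilde f_j \leq (1-\delta)^{-1}\int_G f_j$, one picks up a total multiplier of $(1+\delta)^2(1+\varepsilon')(1-\delta)^{-\sum p_j} = (1+\delta)^2(1+\varepsilon')(1-\delta)^{-2}$ on top of $(C_{p_1}C_{p_2}C_{p_3})^d$; this can be made at most $1+\varepsilon$ by choosing $\delta$ and $\varepsilon'$ sufficiently small at the outset.

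The only point requiring mild care is the verification that $B_2$ is a genuine $C^2$ submersion on a neighbourhood of $(0,0)$, but this is immediate from the smoothness of the group operations together with the surjectivity of $L_2$. Beyond this, the corollary is essentially a transplantation of the Euclidean sharp Young inequality to $G$, with the $(1+\varepsilon)$ slack absorbing both the deviation of the Haar Jacobian from $1$ near $e$ and the perturbative loss in Theorem \ref{mainthm}.
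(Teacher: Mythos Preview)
Your proposal is correct and follows essentially the same route as the paper, which dispatches the corollary in a single sentence by noting that the differentials of the maps $(x,y)\mapsto y$, $(x,y)\mapsto y^{-1}x$, $(x,y)\mapsto x$ at the identity recover the classical Young maps via the Baker--Campbell--Hausdorff formula. You have simply supplied the details the paper leaves implicit---the choice of exponential coordinates, the handling of the Haar Jacobian near $e$, and the bookkeeping of the $\varepsilon$-losses---all of which are in line with the paper's remark on diffeomorphism-invariance.
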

Thus the best constant in an abstract Lie group setting approximates the euclidean constant in \eqref{Young} as the supports of the input functions $f_1, f_2, f_3$ are confined to ever smaller neigbourhoods of $e\in G$.
It is not difficult to see that Corollary \ref{cor} follows from Theorem \ref{mainthm}, as the differentials of the underlying maps $(x,y)\mapsto y$, $(x,y)\mapsto x$, $(x,y)\mapsto x^{-1}y$ at the identity correspond to the linear maps in the classical Young convolution inequality via the Baker--Campbell--Hausdorff formula.
As usual in this context, the measure appearing in the statement above is any Haar measure, be it left or right-invariant. We refer to \cite{CMMP}, \cite{KR} and \cite{Fourn} for further context and related results.

Finally, the nonlinear Brascamp--Lieb inequality is one of several generalisations of \eqref{BL} that have emerged in the last decade or so. In particular, certain \emph{combinatorial Brascamp--Lieb inequalities} of Kakeya type, and closely-related \emph{Fourier-analytic Brascamp--Lieb inequalities} related to the Fourier restriction conjecture, have been established and applied in a variety of contexts -- see for example \cite{BBFL}, \cite{BGD}, \cite{Z} and the references there.

\subsubsection*{Acknowledgments} The authors wish to thank Tony Carbery for many in-depth discussions on the subject of this paper. They also thank Michael Cowling and Alessio Martini for discussions surrounding the local sharp Young's convolution inequality \eqref{lsy}.

\section{The proof strategy}
Our proof of Theorem \ref{mainthm} builds on a strategy used by the first two authors in \cite{BB}, which was in turn inspired by work of Ball \cite{Ball} and Bejenaru, Herr and Tataru \cite{BHT}. The starting point is an inequality originating in \cite{Ball}, which captures an important self-similarity property of the Brascamp--Lieb inequality. As we shall now see, this property strongly suggests approaching any sort of ``perturbation" of the Brascamp--Lieb inequality by the method of induction-on-scales.\footnote{This has been fruitful on several occasions -- see \cite{BEsc}, \cite{BCT}, \cite{BHT}, \cite{BB}, \cite{Guth2}, \cite{BBFL}, along with closely-related papers in Fourier restriction theory beginning with \cite{Bo}.} In order to describe this we define the \emph{Brascamp--Lieb functional} $\mbox{BL}(\linearfamily,\mathbf{p}; \cdot)$ by
$$
\mbox{BL}(\linearfamily,\mathbf{p}; \mathbf{f})=\frac{\int_{\mathbb{R}^n}\prod_{j=1}^m (f_j\circ L_j)^{p_j}}{\prod_{j=1}^m\Bigl(\int_{\mathbb{R}^{n_j}}f_j\Bigr)^{p_j}},
$$
so that
\begin{equation}\label{supfunct}
\mbox{BL}(\linearfamily,\mathbf{p})=\sup_{\mathbf{f}}\mbox{BL}(\linearfamily,\mathbf{p}; \mathbf{f}).
\end{equation}
We begin by observing that for two $L^1$-normalised inputs $\mathbf{f}$ and $\mathbf{g}$ (in the sense that $\int f_j=\int g_j=1$ for all $1\leq j\leq m$),
\begin{eqnarray}\label{Ballineq}
\begin{aligned}
\mbox{BL}(\linearfamily,\mathbf{p}; \mathbf{f})\mbox{BL}(\linearfamily,\mathbf{p}; \mathbf{g})&=\int\Bigl(\prod_{j=1}^m (f_j\circ L_j)^{p_j}\Bigr)*\Bigl(\prod_{k=1}^m (g_k\circ L_k)^{p_k}\Bigr)\\
&=\int\Biggl(\int \prod_{j=1}^m[f_j(L_j(y))g_j(L_j(x-y))]^{p_j}dy\Biggr)dx\\
&=\int\Biggl(\int \prod_{j=1}^m[f_j(L_j(y))g_j(L_j(x)-L_j(y)))]^{p_j}dy\Biggr)dx\\
&=\int\Biggl(\int \prod_{j=1}^m[h_j^x(L_j(y)]^{p_j}dy\Biggr)dx,
\end{aligned}
\end{eqnarray}
where $h_j^x(z)=f_j(z)g_j(L_j(x)-z)$ for each $j$. Writing $\mathbf{h}^x=(h_j^x)_{j=1}^m$ and $\mathbf{f}*\mathbf{g}=(f_j*g_j)_{j=1}^m$, it follows that
\begin{eqnarray*}
\begin{aligned}
\mbox{BL}(\linearfamily,\mathbf{p}; \mathbf{f})\mbox{BL}(\linearfamily,\mathbf{p}; \mathbf{g})&=
\int \mbox{BL}(\linearfamily,\mathbf{p}; \mathbf{h}^x)\prod_{j=1}^m\Bigl(\int_{\mathbb{R}^{n_j}}h_j^x\Bigr)^{p_j}dx\\
&=\int \mbox{BL}(\linearfamily,\mathbf{p}; \mathbf{h}^x)\prod_{j=1}^m(f_j*g_j(L_j(x)))^{p_j}dx\\
&\leq \sup_x \mbox{BL}(\linearfamily,\mathbf{p}; \mathbf{h}^x)\mbox{BL}(\linearfamily,\mathbf{p}; \mathbf{f}*\mathbf{g}).
\end{aligned}
\end{eqnarray*}
This we refer to as Ball's inequality, and it should be noticed that the $L^1$ normalisation hypotheses on $\mathbf{f}$ and $\mathbf{g}$ may be dropped by homogeneity considerations.

Now, if $\mathbf{g}$ is an \emph{extremiser} for \eqref{supfunct}, then we may conclude that
\begin{equation}\label{Ball1}
\mbox{BL}(\linearfamily,\mathbf{p}; \mathbf{f})\leq \mbox{BL}(\linearfamily,\mathbf{p}; \mathbf{f}*\mathbf{g})
\end{equation}
and
\begin{equation}\label{Ball2}
\mbox{BL}(\linearfamily,\mathbf{p}; \mathbf{f})\leq \sup_x \mbox{BL}(\linearfamily,\mathbf{p}; \mathbf{h}^x).
\end{equation}
The inequalities \eqref{Ball1} and \eqref{Ball2} contain a surprising amount of information. For example, \eqref{Ball1} tells us that the set of extremisers for \eqref{BL} is closed under convolution -- a fact that may be used (see \cite{BCCT1}) along with the central limit theorem to deduce the existence of gaussian extremisers (provided the set of extremisers is non-empty of course). In the presence of a gaussian extremiser, inequality \eqref{Ball1} may be used again to deduce that the Brascamp--Lieb functional is non-decreasing as the inputs evolve under a suitable heat flow; again see \cite{BCCT1}.

The closely-related inequality \eqref{Ball2}, while more complicated, will be more important for our purposes. Informally, if we happen to have an extremising input $\mathbf{g}$ which resembles a bump function with small support, then the function
$$
h_j^x(z)=f_j(z)g_j(L_j(x)-z)
$$
is simply the function $f_j$ localised to a small neigbourhood of the point $L_j(x)$. The inequality \eqref{Ball2} then tells us that the Brascamp--Lieb functional increases as the general input functions $f_j$ are localised in this way. Our approach to proving Theorem \ref{mainthm} is motivated by this, and amounts to proving a suitable nonlinear variant of \eqref{Ball2}. This approach is particularly natural, as an input function $f_j$ with ``shrinking support" will be increasingly unable to detect nonlinear structure in $B_j$, ultimately allowing us to reduce matters to a linear Brascamp--Lieb inequality. Proving such a nonlinear variant of \eqref{Ball2} presents several difficulties. Perhaps the most obvious difficulty is in finding a substitute for the explicit use of linearity in \eqref{Ballineq}. Another difficulty relates to the fact that a simple datum $(\mathbf{L}, \mathbf{p})$ only has gaussian extremisers, which needless to say, are not compactly supported.\footnote{In \cite{BB} it is observed that a datum satisfying \eqref{basis} (which is never simple) does have suitable compactly supported extremisers, allowing such a proof of a nonlinear version of \eqref{Ball2} in that case.} We overcome this by \emph{truncating} gaussian extremisers, forming \emph{$\delta$-near extremisers} for suitably small $\delta>0$ -- that is, inputs $\mathbf{g}$ for which
$$
\mbox{BL}(\linearfamily,\mathbf{p}; \mathbf{g})\geq (1-\delta)\mbox{BL}(\linearfamily,\mathbf{p}).
$$
The purpose of truncating is to create small compact support, although this of course comes at a cost: if $\mathbf{g}$ is a $\delta$-near extremiser then \eqref{Ball2} must be replaced with
$$
\mbox{BL}(\linearfamily,\mathbf{p}; \mathbf{f})\leq (1+O(\delta))\sup_x \mbox{BL}(\linearfamily,\mathbf{p}; \mathbf{h}^x).
$$
Provided $\delta$ is taken from a sufficiently lacunary sequence $(\delta_k)$ converging to zero, such factors of $1+O(\delta)$ may be tolerated, as upon iteration it ultimately leads to a convergent
product $\prod_k(1+O(\delta_k))$, which can made as close to $1$ as we please. As may be expected, our argument will require us to employ gaussian extremisers for the linear datum $(\mbox{d}\mathbf{B}(x), \mathbf{p})$ for each $x$ in a neighbourhood of the fixed point $x_0$. Thus many different gaussians will be in play.

As will become apparent, our proof of Theorem \ref{mainthm} actually applies to $m$-tuples $\mathbf{L}$ lying in the set
\begin{equation*}\mbox{E}=\{\mathbf{L}': \mbox{BL}(\mathbf{L}',\mathbf{p};\cdot) \mbox{ has a gaussian extremiser}\},\end{equation*} for which there exists a map sending $\mathbf{L}'\in \mbox{E}$ to a corresponding $m$-tuple of extremising positive definite matrices $\mathbf{A}=(A_j)_{j=1}^m$ (identified with gaussian extremisers via \eqref{gau}), which is continuous in some neighbourhood of $\mathbf{L}$.
The set of $m$-tuples $\mathbf{L}$ for which $(\mathbf{L},\mathbf{p})$ is simple is a proper subset of this, and we choose to state Theorem \eqref{mainthm} in that setting. It is worth noting that the set of $m$-tuples $\mathbf{L}$ satisfying the kernel condition \eqref{basis} is also of this type, and so we implicitly provide a further proof of the nonlinear Loomis--Whitney inequality of \cite{BCW}, \cite{BB}.

\subsubsection*{Organisation of the paper} In Section \ref{sect:settingup} we set up the inductive scheme, reducing matters to a certain recursive inequality (or ``inductive step") and a ``base case" -- the forthcoming Propositions \ref{prop:Csimple} and \ref{prop:basecase} respectively. We prove Proposition \ref{prop:basecase} in Section \ref{sect:settingup}. In Section \ref{sect:gaussians} we introduce the families of gaussians that we shall need, and establish some of their key properties. In Section \ref{sect:step} we prove Proposition \ref{prop:Csimple}.

\section{Setting up the induction}\label{sect:settingup}
By translation invariance we may assume that $x_0=0$ in the statement of Theorem \ref{mainthm}.
As discussed in the previous section, our proof of Theorem \ref{mainthm} will proceed by induction on the size of the supports of the input functions $\mathbf{f}$. 
In order for there to be a base case for this induction it will be convenient to suppose that $\mathbf{f}$ satisfies a certain auxiliary regularity condition, upon which our estimates will be essentially independent. The passage to general integrable $\mathbf{f}$ will consist of an elementary limiting argument.
\begin{definition}\label{localconst} Suppose $\kappa>1,\mu>0$ and $\Omega$ is a measurable subset of $\R^d$. A nonnegative function $f$ is \textit{$\kappa$-constant at scale $\mu$ on $\Omega$} if $f(x)\leq\kappa f(y)$ whenever $x\in\Omega$ and $y\in\mathbb{R}^d$ are such that $|x-y|\leq\mu$. We denote by $L^1(\Omega;\mu,\kappa)$ the subset of $L^1(\mathbb{R}^d)$ with this property.
\end{definition}
The reader will observe that the definition above lacks symmetry in $x$ and $y$: we stipulate that $x\in\Omega$, while $y$ may extend a distance $\mu$ from $\Omega$. The reason for this technicality will become clear shortly. At this stage it is worth noting that for any fixed $\kappa>1$, a nonnegative function $f\in L^1(\Omega)$ may be approximated almost everywhere by functions in $L^1(\Omega;\mu,\kappa)$, provided $\mu$ is taken sufficiently small. One way to see this is to observe that the $d$-dimensional Poisson kernel $P_t(x)$ is $\kappa$-constant at scale $\mu$ on the whole of $\R^{d}$ provided $\mu$ is small enough depending on $t>0$ and $\kappa>1$. By linearity, the convolution $f*P_t$ is easily seen to inherit this regularity property for any nonnegative $f\in L^1(\mathbb{R}^{d})$, and so the claimed almost everywhere approximation follows from the Lebesgue differentiation theorem.

We will allow $\kappa$ to vary in a controlled manner through the induction. This flexibility in $\kappa$ is required when considering products of such ``locally constant" functions. In particular, we will need to appeal to the elementary fact that if $f\in L^1(\Omega;\mu,\kappa)$ and $g\in L^1(\Omega;\mu,\lambda)$, then $fg\in L^1(\Omega;\mu,\kappa\lambda)$. Considerations of this type naturally arise when introducing partitions of unity, as we shall to pass between scales.

We now set up the induction. For each $0<\delta\ll1$ and $y\in\mathbb{R}^n$, let
\begin{equation}\label{dilnhd}
U_\delta(y)=\{x:|x-y|\leq\delta\log(1/\delta)\}.
\end{equation}
For $u\in \R^n$, $\delta>0$, $\mu> 0$ and $\kappa>1$, let $\mathcal{C}(u,\delta,\mu,\kappa)$ denote the best constant $C$ in the inequality
$$\int_{U_\delta(u)}\prod_{j=1}^m(f_j\circ B_j(y))^{p_j} dy\leq C\prod_{j=1}^m\left(\int_{\R^{n_j}}f_j\right)^{p_j}$$
over all  inputs\footnote{We clarify that we use $2U_\delta(u)$ to denote the ball with centre $u$ and radius $2\delta \log (1/\delta)$.} $f_j\in L^1(B_j(2U_\delta(u));\mu,\kappa)$. We think of $\mathcal{C}(u,\delta,\mu,\kappa)$ as a certain (regularised and localised) nonlinear Brascamp--Lieb constant; here we are of course suppressing the dependence on $\mathbf{B}$ and $\mathbf{p}$ in our notation.
The mysterious-looking logarithmic factor in the definition of $U_\delta(y)$ is included as our passage between scales in the parameter $\delta$ (see the statement of Proposition \ref{prop:Csimple} below) will involve localising using \emph{gaussians}, for which logarithmic truncations turn out to be large enough to capture their essential shape.
We also rely heavily on the sub-power growth rate of the logarithm to make these truncated gaussians fit into the framework of $\kappa$-constant functions introduced in Definition \ref{localconst}. The details may be found in Section \ref{sect:gaussians}, where it becomes apparent that any function which grows slower than a sufficient small positive power, but no slower than the square root of the logarithm, would suffice to prove Theorem \ref{mainthm} -- the logarithm is simply a convenient example.
The requirement that $f_j\in L^1(B_j(2U_\delta(u));\mu,\kappa)$ may also seem unusual as the integral on the left-hand-side does not see the part of $f_j$ supported outside of $B_j(U_\delta(u))$, whereas the right-hand-side does. This merely technical feature will be important for closing the induction.

In what follows let $\alpha,\beta$ be fixed parameters satisfying $\alpha>1$, $\beta>0$ and $\alpha+\beta<2$. For merely technical purposes, fix a further parameter $\beta<\beta' <2-\alpha$.\footnote{The reader may wish to observe that the $C^2$ regularity hypothesis on $\mathbf{B}$ may be weakened to $C^{1,\theta}$ for any $\theta>0$, upon which the condition $\alpha+\beta<2$ should be tightened to $\alpha+\beta<1+\theta$ and the relation $\beta<\beta' <2-\alpha$ of course replaced with $\beta<\beta' <1+\theta-\alpha$.}

\begin{notation}
We write $C \simeq 1$ to mean $C$ is equal to a positive finite constant which depends on at most $\mathbf{B}$, $\mathbf{p}$, and the fixed parameters $\alpha, \beta, \beta'$. For $\varepsilon > 0$, we write $C \simeq_\varepsilon 1$ to mean that $C$ is equal to such a constant which may also depend on $\varepsilon$.
Finally, we write $A \lesssim B$ and $B \gtrsim A$ to mean $A \leq CB$, where $C \simeq 1$, and $A \sim B$ means that $A \lesssim B$ and $A \gtrsim B$ both hold.
\end{notation}

If $\delta$ is below a certain threshold in terms of $\mu$, then each $f_j\in L^1(B_j(2U_\delta(u));\mu,\kappa)$ will (effectively) cease to distinguish between $B_j$ and its best affine approximation at $u$
$$L^u_j:=B_j(u)+dB_j(u)(\cdot-u.)$$
This allows us to reduce the estimation of $\mathcal{C}(u,\delta,\mu,\kappa)$ to an application of the \emph{linear} Brascamp--Lieb inequality, and provides us with an effective ``base case" for the inductive argument. This base case is contained in the first of our two propositions below. Before stating this, we note that by the $C^2$ regularity of the $B_j$, we have
\begin{equation} \label{taylor}
|B_j(x) - L^u_jx| \lesssim |x-u|^2
\end{equation}
for all $x, u \in U_{\nu}(0)$ (with appropriately small $\nu \simeq 1$), and we make frequent use of this fact.
\begin{prop}[Base case] \label{prop:basecase}
There exists $\nu \simeq 1$ such that the following holds: if $u \in U_{\nu}(0)$, $\kappa > 1$, and if $\delta \in (0,\nu)$ and $\mu > 0$ satisfy $\delta^{\alpha+\beta'}\leq\mu$, then
\[
\mathcal{C}(u,\delta,\mu,\kappa) \leq \kappa^\sigma \bl(\mathrm{d}\mathbf{B}(u), \mathbf{p}),
\]
where $\sigma = \sum_{j=1}^m p_j$.
\end{prop}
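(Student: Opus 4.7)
The strategy is straightforward: on $U_\delta(u)$ the nonlinear map $B_j$ is within $O((\delta\log(1/\delta))^2)$ of its affine approximation $L_j^u$, and this discrepancy is much smaller than the locally-constant scale $\mu$. One can therefore replace $f_j \circ B_j$ by $f_j \circ L_j^u$ at the cost of a single factor $\kappa$ per input, and then apply the classical Brascamp--Lieb inequality to the affine data $(\mathrm{d}\mathbf{B}(u),\mathbf{p})$.

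First I would verify the compatibility of scales. For $y \in U_\delta(u)$ one has $|y-u| \leq \delta \log(1/\delta)$, so by \eqref{taylor},
\[
|B_j(y) - L_j^u(y)| \lesssim \delta^2 \log^2(1/\delta),
\]
provided $\nu \simeq 1$ is small enough that all points under consideration lie in the neighborhood where \eqref{taylor} is valid. Since $\alpha + \beta' < 2$, the ratio $\delta^{2-\alpha-\beta'}\log^2(1/\delta)$ tends to $0$ as $\delta \to 0$, so by further shrinking $\nu$ the displayed quantity is bounded by $\delta^{\alpha+\beta'} \leq \mu$ for every $\delta \in (0,\nu)$. This is precisely where the sub-power growth of the logarithm, together with the strict inequality $\alpha + \beta' < 2$, enters.

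Now fix $y \in U_\delta(u)$, and observe that $B_j(y) \in B_j(U_\delta(u)) \subset B_j(2U_\delta(u))$. Applying the $\kappa$-constant property of $f_j$ with anchor $B_j(y)$ and comparison point $L_j^u(y)$ -- which the asymmetric form of Definition \ref{localconst} permits to lie anywhere in $\mathbb{R}^{n_j}$ -- yields $f_j(B_j(y)) \leq \kappa f_j(L_j^u(y))$ pointwise on $U_\delta(u)$. Raising to the $p_j$-th power, taking the product over $j$, and enlarging the domain of integration to $\mathbb{R}^n$ gives
\[
\int_{U_\delta(u)} \prod_{j=1}^m f_j(B_j(y))^{p_j}\,dy \leq \kappa^\sigma \int_{\mathbb{R}^n}\prod_{j=1}^m f_j(L_j^u(y))^{p_j}\,dy.
\]
Translating the variable $y$ by $u$ and each input $f_j$ by $B_j(u)$, the right-hand integral becomes the classical Brascamp--Lieb integrand for the datum $(\mathrm{d}\mathbf{B}(u),\mathbf{p})$. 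Since simplicity is an open condition on $\mathbf{L}$, shrinking $\nu$ once more ensures that $(\mathrm{d}\mathbf{B}(u),\mathbf{p})$ remains simple -- in particular that $\bl(\mathrm{d}\mathbf{B}(u),\mathbf{p}) < \infty$ -- for all $u \in U_\nu(0)$, and \eqref{BL} then delivers the stated bound. The proof involves no deep estimate; the only obstacle worth flagging is the bookkeeping which forces the precise threshold $\delta^{\alpha+\beta'} \leq \mu$ in the hypothesis.
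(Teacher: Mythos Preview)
Your proof is correct and follows essentially the same approach as the paper: Taylor-approximate $B_j$ by $L_j^u$ on $U_\delta(u)$, use the threshold $\delta^{\alpha+\beta'}\le\mu$ together with $\alpha+\beta'<2$ to absorb the $O(\delta^2\log^2(1/\delta))$ error into a single factor of $\kappa$ via the local-constancy hypothesis, and then invoke the linear Brascamp--Lieb inequality after translation. Your explicit remark on the asymmetry in Definition~\ref{localconst} and the observation $B_j(y)\in B_j(2U_\delta(u))$ are exactly what the paper uses implicitly; the only superfluous step is the appeal to openness of simplicity, since the stated bound is vacuous when $\bl(\mathrm{d}\mathbf{B}(u),\mathbf{p})=\infty$.
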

\begin{proof} Let $\nu\simeq 1$ be a small constant to be chosen below. We fix $u \in U_{\nu}(0)$, and assume $\delta \in (0,\nu)$ and $\mu>0$ satisfy $\delta^{\alpha+\beta'} \leq\mu$.

By choosing $\nu\simeq 1$ sufficiently small and applying \eqref{taylor},
\[
|B_j(y)-L^u_jy| \lesssim \delta^2 \left(\log \frac{1}{\delta} \right)^2
\]
for all $y\in U_\delta(u)$. From this, combined with $\alpha+\beta'<2$ and the assumption $\delta^{\alpha+\beta'} \leq\mu$, it follows that
$
|B_j(y)-L^u_jy| \leq \mu,
$
for a perhaps smaller choice of $\nu \simeq 1$. Since $f_j\in L^1(2B_j(U_\delta(u));\mu,\kappa)$, we have
$
f_j(B_j(y))\leq \kappa f_j(L^u_jy)
$
for all $y\in U_\delta(u)$, and so
\begin{align*}
\int_{U_{\delta}(u)}\prod_{j=1}^m f_j(B_j(y))^{p_j} dy & \leq \kappa^\sigma \int_{U_{\delta}(u)}\prod_{j=1}^m f_j(L^u_jy)^{p_j} dy \\
& \leq \kappa^\sigma \mbox{BL}(\mathrm{d} \textbf{B}(u), \textbf{\mbox{p}}) \prod_{j=1}^m\left(\int_{\R^{n_j}}f_j\right)^{p_j},
\end{align*}
as required. Here we have used the translation-invariance of the linear Brascamp--Lieb inequality in the $f_j$ to remove the translations in the above affine approximation.
\end{proof}

Our objective is to prove a suitable recursive inequality involving the function $\mathcal{C}$, which upon iteration will establish Theorem \ref{mainthm}. This inequality, which we now state, may be viewed as a certain nonlinear version of \eqref{Ball2}.
\begin{prop}[Recursive inequality] \label{prop:Csimple}
There exists $\nu \simeq 1$ such that the following holds: if $u \in U_{\nu}(0)$, $\kappa > 1$, and if $\delta \in (0,\nu)$ and $\mu > 0$ satisfy $\delta^{\alpha+\beta'}>\mu$, then
\begin{equation}\label{inductionstep}
\mathcal{C}(u,\delta,\mu,\kappa) \leq (1 + \delta^{\beta}) \max_{x\in 2U_\delta(u)} \mathcal{C}(x,\delta^\alpha,\mu,\kappa\exp(\delta^\beta)).
\end{equation}
\end{prop}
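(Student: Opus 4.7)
My plan follows the strategy outlined in Section~2: establish a nonlinear version of Ball's inequality \eqref{Ball2} using a truncated gaussian near-extremiser for the linear datum $(\mathrm{d}\mathbf{B}(u),\mathbf{p})$. By the simplicity hypothesis and its openness \cite{ValdBestBest}, for $u$ in a neighbourhood of $0$ the datum $(\mathrm{d}\mathbf{B}(u),\mathbf{p})$ is simple and admits a gaussian extremiser varying continuously in $u$. I rescale this gaussian to have spatial scale $\sim\delta^\alpha$ and truncate it to support in a ball of radius of order $\delta^\alpha\log(1/\delta^\alpha)$, producing a family $\mathbf{g}=(g_j)_{j=1}^m$. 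The proportionality constant in the truncation radius is chosen, using the quantitative injectivity of $w\mapsto(\mathrm{d}B_1(u)w,\ldots,\mathrm{d}B_m(u)w)$ (which follows from $\bigcap_j\ker \mathrm{d}B_j(u)=\{0\}$, forced by BL finiteness), so that the joint support condition $g_j(\mathrm{d}B_j(u)(x-y))\neq 0$ for all $j$ implies $|x-y|\leq\delta^\alpha\log(1/\delta^\alpha)$.

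The central step is a Ball-type Fubini manipulation. The quantity $C_{\mathbf{g}}:=\int_{\R^n}\prod_j g_j(\mathrm{d}B_j(u)(x-y))^{p_j}\,dx$ is independent of $y$ by translation invariance in $x$, and by near-extremisation satisfies $C_{\mathbf{g}}\geq(1-o(1))\mbox{BL}(\mathrm{d}\mathbf{B}(u),\mathbf{p})\prod_j(\int g_j)^{p_j}$. Multiplying $I:=\int_{U_\delta(u)}\prod_j(f_j\circ B_j)^{p_j}$ by $C_{\mathbf{g}}$ and exchanging the order of integration yields
\[
I\cdot C_{\mathbf{g}}=\int_{\R^n}dx\int_{U_\delta(u)}dy\prod_j\bigl[f_j(B_j(y))\,g_j(\mathrm{d}B_j(u)(x-y))\bigr]^{p_j}.
\]
I then substitute $g_j(\mathrm{d}B_j(u)(x-y))$ by $g_j(L_j^u(x)-B_j(y))$; the displacement $|L_j^u(y)-B_j(y)|\lesssim\delta^2\log^2(1/\delta)$ from \eqref{taylor}, together with the bound $|\nabla\log g_j|\lesssim\delta^{-\alpha}$ on $\mathrm{supp}\,g_j$, produces a multiplicative loss of $\exp(O(\delta^{2-\alpha}\log^3(1/\delta)))$, which is at most $1+O(\delta^\beta)$ since $\alpha+\beta<2$.

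Setting $h_j^x(z):=f_j(z)g_j(L_j^u(x)-z)$, the support of $g_j$ together with the injectivity above confines the inner $y$-integrand to $U_{\delta^\alpha}(x)$. I then apply the definition of $\mathcal{C}(x,\delta^\alpha,\mu,\kappa\exp(\delta^\beta))$, which requires $h_j^x\in L^1(B_j(2U_{\delta^\alpha}(x));\mu,\kappa\exp(\delta^\beta))$. The $f_j$-factor contributes $\kappa$ (using $2U_{\delta^\alpha}(x)\subset 2U_\delta(u)$, which holds on the effective range of $x$---confined by the outer integrand to $|x-u|\leq(1+o(1))\delta\log(1/\delta)$---once $\nu$ is small and $\alpha>1$); the gaussian $g_j$ at scale $\delta^\alpha$ is $\exp(O(\delta^{-\alpha}\mu))$-constant at scale $\mu$, and this is bounded by $\exp(\delta^\beta)$ because $\mu\leq\delta^{\alpha+\beta'}$ and $\beta'>\beta$. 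Finally, the outer $x$-integral of $\prod_j(\int h_j^x)^{p_j}=\prod_j((f_j*g_j)(L_j^u(x)))^{p_j}$ is, by the linear Brascamp--Lieb inequality for $(\mathrm{d}\mathbf{B}(u),\mathbf{p})$ (applied after absorbing the affine offset $B_j(u)-\mathrm{d}B_j(u)u$), bounded by $\mbox{BL}(\mathrm{d}\mathbf{B}(u),\mathbf{p})\prod_j(\int f_j)^{p_j}\prod_j(\int g_j)^{p_j}$. Dividing by $C_{\mathbf{g}}$ cancels the BL and $\int g_j$ factors, yielding the required $(1+\delta^\beta)\max_x\mathcal{C}(x,\delta^\alpha,\mu,\kappa\exp(\delta^\beta))\prod_j(\int f_j)^{p_j}$ for $\delta$ sufficiently small.

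The principal technical obstacle is to construct the truncated gaussian family $\mathbf{g}$ so that three competing requirements are met simultaneously: (i) close proximity to a true gaussian extremiser for $(\mathrm{d}\mathbf{B}(u),\mathbf{p})$, so the Ball identity is near-sharp; (ii) support tight enough to confine $h_j^x\circ B_j$ to $U_{\delta^\alpha}(x)$; and (iii) logarithmic gradient small enough at scale $\mu$ to keep the regularity constant growth bounded by $\exp(\delta^\beta)$. These three demands are precisely balanced by the hypotheses $\mu<\delta^{\alpha+\beta'}$ and $\beta<\beta'<2-\alpha$; carrying out the construction (including smooth dependence on $u$) and formalising the estimates for its logarithmic derivative and tail is the work promised in Section~\ref{sect:gaussians}.
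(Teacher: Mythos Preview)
Your overall plan is the paper's: insert a gaussian extremiser for $(\mathrm{d}\mathbf{B}(u),\mathbf{p})$ at scale $\delta^\alpha$, run the Ball-type Fubini identity, recognise the inner integral as $\int_{U_{\delta^\alpha}(x)}\prod_j(h_j^x\circ B_j)^{p_j}$, apply the definition of $\mathcal{C}(x,\delta^\alpha,\ldots)$, and finish with the linear Brascamp--Lieb inequality in the outer variable. The one substantive difference is that you truncate the gaussians $g_j$ to compact support, whereas the paper keeps them untruncated and instead truncates the \emph{domain} of the auxiliary integral over $\R^n$ to $U_{\delta^\alpha}(0)$ (Lemma~\ref{lem:truncation}); confinement of the inner $y$-integral to $U_{\delta^\alpha}(x)$ then comes directly from that restricted domain after Fubini, with no appeal to the support of $g_j$.

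That difference creates a real gap. A compactly supported $g_j$ cannot have $|\nabla\log g_j|\lesssim\delta^{-\alpha}$ on its support, since $|\nabla g_j|/g_j$ blows up as $g_j\to 0$ near the boundary. Consequently your pointwise multiplicative perturbation $g_j(\mathrm{d}B_j(u)(x-y))\leq e^{O(\delta^{2-\alpha}\log^3(1/\delta))}g_j(L_j^u(x)-B_j(y))$ fails whenever the first argument lies just inside the support and the second (displaced by $O(\delta^2\log^2(1/\delta))$) falls outside. More decisively, your requirements (ii) and (iii) are incompatible, not merely in tension: for $h_j^x$ to be $\kappa$-constant on $B_j(2U_{\delta^\alpha}(x))$ without boundary failure, the truncation radius $r_j$ must exceed the radius of $\{L_j^u(x)-w:w\in B_j(2U_{\delta^\alpha}(x))\}$, which is of order $2\|\mathrm{d}B_j(u)\|\delta^\alpha\log(1/\delta^\alpha)$; but with every $r_j$ that large, any $v$ with $|v|\leq 2\delta^\alpha\log(1/\delta^\alpha)$ satisfies $|\mathrm{d}B_j(u)v|\leq r_j$ for all $j$, so the support condition cannot confine $y$ to $U_{\delta^\alpha}(x)$. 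The paper decouples these issues by keeping $g_j$ untruncated: the perturbation is then controlled \emph{additively} in $L^1$ via a global bound on $|\nabla G|$ and bootstrapped to a multiplicative bound (Lemma~\ref{lem:perturb}), and the $\kappa$-constancy on $B_j(2U_{\delta^\alpha}(x))$ follows from the untruncated gaussian's bounded logarithmic derivative on balls of radius $O(\delta^\alpha\log(1/\delta^\alpha))$ (Lemma~\ref{lem:gaussconstant}).
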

The reader may wish to view \eqref{inductionstep} as a certain ``near-monotonicity property" of the function $\mathcal{C}$ in the parameter $\delta$, since the multiplicative factors $(1 + \delta^{\beta})$ and $\exp(\delta^\beta)$ are close to $1$ for small $\delta$. As we have mentioned, this is a certain approximate version of the exact monotonicity property \eqref{Ball2}. It is interesting to compare \eqref{inductionstep} with the related near-monotonicity results in \cite{BCT}, \cite{BHT}, \cite{BB} and \cite{TaoBil}.

We conclude this section by showing how Theorem \ref{mainthm} follows from Propositions \ref{prop:Csimple} and \ref{prop:basecase}.

\begin{proof}[Proof of Theorem \ref{mainthm}]
Fix $\varepsilon>0$ and set $\kappa=1+\varepsilon$.
By an elementary density argument, which we now sketch, it will be enough to show that there exists $\delta_0 \simeq_\varepsilon 1$ and $C \simeq 1$ such that
\begin{equation}\label{nuf}
\mathcal{C}(0,\delta_0,\mu, 1+\varepsilon)\leq (1+C\varepsilon)\mbox{BL}(\mathrm{d} \textbf{\mbox{B}}(0), \textbf{\mbox{p}})
\end{equation}
uniformly in $\mu>0$.
As we observed above, for each $t>0$, and $f_j\in L^1(2B_j(U_{\delta_0}(0)))$, the Poisson extension satisfies $f_j*P^{(n_j)}_t\in L^1(\mathbb{R}^{n_j}; \mu, \kappa)\subseteq L^1(2B_j(U_{\delta_0}(0));\mu, \kappa)$ provided $\mu$ is chosen sufficiently small. Hence \eqref{nuf}, combined with the $L^1$ normalisation of the $n_j$-dimensional Poisson kernel $P_t^{(n_j)}$, imply that
$$
\int_{U_{\delta_0}(0)}\prod_{j=1}^m (f_j*P^{(n_j)}_t\circ B_j)^{p_j}\leq (1+C\varepsilon)\mbox{BL}(\mathrm{d} \textbf{\mbox{B}}(0), \textbf{\mbox{p}})\prod_{j=1}^m\left(\int_{\mathbb{R}^{n_j}}f_j\right)^{p_j}
$$
uniformly in $t>0$. Since the nonnegative function $f_j*P^{(n_j)}_t$ converges to $f_j$ almost everywhere as $t\rightarrow 0$, Theorem \ref{mainthm} follows by Fatou's lemma. We now turn to the proof of \eqref{nuf}.

We fix a sufficiently small $\delta_0 \simeq_{\varepsilon} 1$ to be chosen to satisfy a number of constraints. The first of these is to take $\delta_0$ to satisfy $\delta_0\leq \nu/3$, where $\nu$ is provided by both Propositions \ref{prop:basecase} and \ref{prop:Csimple} (that we may take such a common $\nu$ follows from the fact that both propositions implicitly require that $\nu$ is sufficiently small).

Given $\mu>0$, either the base case provided by Proposition \ref{prop:basecase} holds directly, $\delta_0^{\alpha+\beta'}\leq \mu$, or $\mu$ satisfies the threshold condition, $\delta_0^{\alpha+\beta'}>\mu$. In the former case there is nothing more to prove. In the latter we shall use the recursive inequality \eqref{inductionstep} of Proposition \ref{prop:Csimple} to connect $\mathcal{C}(0,\delta_0,\mu, 1+\varepsilon)$ with the base case.

Defining the sequence $\delta_k=\delta_0^{\alpha^{k}}$, Proposition \ref{prop:Csimple} yields
\begin{equation}\label{toiterate}
\mathcal{C}(u,\delta_k,\mu,\kappa) \leq (1 + \delta_k^{\beta}) \max_{x\in 2U_{\delta_k}(u)} \;\mathcal{C}(x,\delta_{k+1},\mu,\kappa\exp(\delta_k^{\beta}))
\end{equation}
for all $u$ belonging to the algebraic sum
$
\widetilde{U}_k:=2U_{\delta_0}(0)+2U_{\delta_1}(0)+\cdots+2U_{\delta_{k-1}}(0)
$
and
$k$ such that $\delta_k^{\alpha+\beta'}>\mu$. Here we see the reason for the constraint $\delta_0\leq \nu/3$ (rather than the seemingly more natural $\delta_0\leq\nu$), since \eqref{toiterate} requires that $\widetilde{U}_k\subset U_\nu(0)$, which is indeed the case for sufficiently small $\delta_0$ independent of $k$.

Choosing $k_*$ to be the smallest natural number $k$ for which $\delta_k^{\alpha+\beta'}\leq\mu$, we may iterate \eqref{toiterate} $k_*$ times to obtain
$$
\mathcal{C}(0,\delta_0,\mu, 1+\varepsilon) \leq \prod_{k=0}^{k_*-1}(1 + \delta_k^{\beta}) \max_{u\in \widetilde{U}_{k_*}} \mathcal{C}\Bigl(u,\delta_{k_*},\mu,(1+\varepsilon)\prod_{k=0}^{k_*-1}\exp(\delta_k^{\beta})\Bigr).
$$
Since $\delta_{k_*}^{\alpha+\beta'}\leq\mu$ we may now apply Proposition \ref{prop:basecase} to obtain
\begin{equation} \label{almostthere}
\mathcal{C}(0,\delta_0,\mu, 1+\varepsilon) \leq (1+\varepsilon)^\sigma \max_{u\in \widetilde{U}_{k_*}} \mbox{BL}(\mathrm{d} \textbf{\mbox{B}}(u), \textbf{\mbox{p}}) \prod_{k=0}^{k_*-1}\left(1 + \delta_k^{\beta}\right)\exp(\sigma\delta_k^{\beta}) .
\end{equation}
At this point, it is clear that we need the Brascamp--Lieb constant to behave sufficiently nicely with respect to the linear mappings; for simple data, it was shown in \cite{ValdBestBest} that the constant is smooth with respect to the linear mappings and we record this fact here (for general data, the constant is continuous which was recently shown in \cite{BBCF} and \cite{GGOW}).
\begin{prop}[\cite{ValdBestBest}] \label{prop:BLcts}
For each fixed $\mathbf{p}$, the function $\BL(\cdot,\mathbf{p})$ is differentiable on the open set $\{\mathbf{L}: (\mathbf{L},\mathbf{p}) \mbox{ simple}\}$.
\end{prop}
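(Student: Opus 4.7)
My plan would be to deduce this from Lieb's theorem and the implicit function theorem, exploiting the explicit formula for the Gaussian Brascamp--Lieb functional. By Lieb's theorem,
\[
\BL(\mathbf{L},\mathbf{p}) = \sup_{\mathbf{A}} F(\mathbf{L},\mathbf{A}), \qquad F(\mathbf{L},\mathbf{A}) := \frac{\prod_{j=1}^m (\det A_j)^{p_j/2}}{\det\bigl(\sum_{j=1}^m p_j L_j^* A_j L_j\bigr)^{1/2}},
\]
where the supremum is taken over $m$-tuples $\mathbf{A}=(A_j)$ of positive-definite matrices (with $A_j$ of size $n_j \times n_j$). The integrand here is real-analytic in $(\mathbf{L},\mathbf{A})$ on its natural domain, so the content of the proposition is that the supremum depends smoothly on $\mathbf{L}$ near any simple datum.

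The scaling condition $n = \sum_j p_j n_j$ makes $F$ invariant under the $1$-parameter group $\mathbf{A} \mapsto \lambda \mathbf{A}$, so the first step is to mod this out by working on a transverse slice $\mathcal{S}$ (e.g.\ $\det A_1 = 1$). For simple data, the characterisation of gaussian extremisers in \cite{BCCT1} gives a unique critical point $\mathbf{A}^*(\mathbf{L}) \in \mathcal{S}$ realising the supremum, and the scale-invariance footnote in the introduction confirms this is the only ambiguity. The second step is to verify that the Hessian of $\log F$, computed along $\mathcal{S}$ at $\mathbf{A}^*(\mathbf{L})$, is strictly negative-definite. The Euler--Lagrange equations for $F$ can be written in the form $A_j = P_j (\sum_k p_k L_k^* A_k L_k)^{-1} P_j^*$ in suitable coordinates; differentiating and linearising against the simplicity inequalities $\dim V < \sum_j p_j \dim(L_j V)$ for nontrivial proper $V \subsetneq \mathbb{R}^n$ should rule out every null direction of the quadratic form obtained from the second variation.

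With the Hessian nondegenerate, the third step is a direct application of the implicit function theorem to the critical-point equation $\nabla_{\mathbf{A}} \log F(\mathbf{L}, \mathbf{A})|_{\mathcal{S}} = 0$: it provides a real-analytic map $\mathbf{L} \mapsto \mathbf{A}^*(\mathbf{L})$ on a neighbourhood of any simple $\mathbf{L}_0$. Composing with $F$ then gives
\[
\BL(\mathbf{L},\mathbf{p}) = F(\mathbf{L}, \mathbf{A}^*(\mathbf{L})),
\]
which is a composition of real-analytic functions and in particular differentiable, as required.

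The main obstacle, in my view, is the nondegeneracy of the Hessian in the second step. One has to translate the combinatorial strict inequalities defining simplicity into a quantitative positivity statement for the second variation of $\log F$ along $\mathcal{S}$; this is essentially a sharp version of Lieb's concavity-type arguments, and the delicate point is ensuring that the only direction of degeneracy is the already-quotiented scaling direction. Once this is in place, the rest is formal.
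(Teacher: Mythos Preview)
The paper does not supply its own proof of this proposition; it is quoted from \cite{ValdBestBest} and used as a black box. Your outline is essentially the strategy of that reference, and the paper itself confirms this in Section~\ref{sect:gaussians}, where it records that Valdimarsson shows the extremiser map $\mathbf{L}\mapsto\mathbf{A}$ is smooth at simple data (after normalising away the scaling redundancy), from which smoothness of $\BL(\cdot,\mathbf{p})=F(\cdot,\mathbf{A}^*(\cdot))$ follows by composition --- precisely your third step. So there is nothing to contrast: your plan and the cited argument coincide.

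You are also right that the only substantive step is the nondegeneracy of the Hessian of $\log F$ along the slice $\mathcal{S}$. Your description of how to extract this from the strict inequalities $\dim V < \sum_j p_j \dim(L_j V)$ is correct in spirit but is not yet a proof: one must actually show that any null direction of the second variation produces a subspace $V$ realising equality, and this is the main computation in \cite{ValdBestBest}. Once that is done, the implicit function theorem argument you give is exactly what closes the proof.
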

By Proposition \ref{prop:BLcts}, we know that $\mbox{BL}(\mathrm{d} \textbf{\mbox{B}}(u), \textbf{\mbox{p}})\leq (1+\varepsilon)\mbox{BL}(\mathrm{d} \textbf{\mbox{B}}(0), \textbf{\mbox{p}})$ for an appropriate choice of $\delta_0 \simeq_\varepsilon 1$.


For the product term in \eqref{almostthere}, note that
\begin{equation*}
\log\left( \prod_{k=0}^{k_*-1}(1+\delta_k^\beta)\exp(\sigma\delta_k^{\beta})\right) \leq (1+\sigma)\sum_{k=0}^{\infty} \delta_k^{\beta} = (1+\sigma)\sum_{k=0}^{\infty} \delta_0^{\alpha^k\beta}\lesssim \delta_0^{\beta}
\end{equation*}
and therefore, for an appropriate choice of $\delta_0 \simeq_\varepsilon 1$, we have $\prod_{k=0}^{k_*-1}(1+\delta_k^\beta)\exp(\sigma\delta_k^{\beta}) \leq 1 + \varepsilon$.
Putting together the above, we have shown
\[
\mathcal{C}(0,\delta_0,\mu, 1+\varepsilon) \leq (1+\varepsilon)^{\sigma + 3} \mbox{BL}(\mathrm{d} \textbf{\mbox{B}}(0), \textbf{\mbox{p}}),
\]
which establishes \eqref{nuf}, and hence Theorem \ref{mainthm}.
\end{proof}

It remains to prove Proposition \ref{prop:Csimple}.

\section{Gaussian preliminaries}\label{sect:gaussians}
Before embarking on our proof of Proposition \ref{prop:Csimple}, we  introduce a family of $m$-tuples of gaussians that will be pivotal in our argument.

As we remark in the introduction, for each $\mathbf{p}$, the set $\{\mathbf{L}:(\mathbf{L},\mathbf{p})\mbox{ simple}\}$ is \emph{open}, and the simplicity of $(\mathbf{L},\mathbf{p})$ implies the existence of a gaussian extremising input $\mathbf{f}$; that is,
an $m$-tuple of functions $(f_j)$ of the form \eqref{gau} for which $$\mbox{BL}(\mathbf{L},\mathbf{p};\mathbf{f})=\mbox{BL}(\mathbf{L},\mathbf{p})$$ -- see \cite{ValdBestBest} and \cite{BCCT1} respectively.
As is also shown in \cite{BCCT1}, this gaussian extremiser $\mathbf{f}$ is \emph{unique} up to the elementary scale-invariance of the Brascamp--Lieb inequality. In \cite{ValdBestBest} Valdimarsson also shows that the map $\mathbf{L}\mapsto\mathbf{A}$ is \emph{smooth} at simple data points, provided the $m$-tuple of positive-definite matrices $\mathbf{A}=(A_j)_{j=1}^m$ is suitably normalised. As a consequence it follows that the map $\mbox{BL}(\cdot,\mathbf{p})$ is also smooth at simple data points -- a fact that we have already used in Section \ref{sect:settingup}.

These results have a clear interpretation in our nonlinear context. In particular, the simplicity hypothesis on $(\mbox{d}\mathbf{B}(0),\mathbf{p})$ in the statement of Theorem \ref{mainthm} ensures the simplicity of $(\mbox{d}\mathbf{B}(u),\mathbf{p})$ provided $u\in\mathbb{R}^n$ is sufficiently small. Furthermore, for each such $u$,
it follows that there is a unique (up to scaling) family of gaussians $\mathbf{g}_u=(g_{u,j})_{j=1}^m$ for which
\begin{equation}\label{brian}
\mbox{BL}(\mbox{d}\mathbf{B}(u),\mathbf{p};\mathbf{g}_u)=\mbox{BL}(\mbox{d}\mathbf{B}(u),\mathbf{p}).
\end{equation}
Here
\begin{equation}\label{steve}
g_{u,j}(x)=c_{u,j}e^{-\pi\langle A_{u,j}x,x\rangle},
\end{equation}
where $A_{u,j}$ is a positive-definite $n_j\times n_j$ matrix, and $c_{u,j}$ a positive real number. Finally, the family $\mathbf{A}_u=(A_{u,j})_{j=1}^m$ may be chosen normalised in such a way that it varies smoothly with $u$. Of course, this also implies that the map $\mbox{BL}(\mbox{d}\mathbf{B}(\cdot),\mathbf{p})$ is smooth in a neighbourhood of the origin -- again, something that we have already used in Section \ref{sect:settingup}.

As in the proof \eqref{Ballineq} of Ball's inequality, it will also be convenient to normalise the functions $g_{u,j}$ so that
$\int g_{j,u}=1$
for each $u$ and $1\leq j\leq m$. This simply amounts to setting $c_{u,j}=\det(A_{u,j})^{1/2}$ in \eqref{steve}.

In what follows the $n\times n$ matrix
$$
M_u := \sum_{j=1}^m p_j (dB_j(u))^* A_{u,j} dB_j(u)
$$
will play a crucial role. It naturally arises via the identity
\begin{eqnarray}\label{maingaus}
\begin{aligned}
\prod_{j=1}^m g_{u,j}(dB_j(u)x)^{p_j}&=\prod_{j=1}^m (\det(A_{u,j}))^{p_j/2}e^{-\pi\langle M_u x,x\rangle}\\&=\mbox{BL}(\mbox{d}\mathbf{B}(u),\mathbf{p})(\det(M_u))^{1/2}e^{-\pi\langle M_u x,x\rangle},
\end{aligned}
\end{eqnarray}
which is a simple consequence of \eqref{brian}.

Since each $A_{u,j}$ is positive definite, each $p_j>0$ and $\mbox{d}\mathbf{B}(u)$ is an $m$-tuple of surjective linear maps satisfying $$\bigcap_{i=1}^m \ker{dB_j(u)}={0},$$ the matrix $M_u$ is positive definite (at least for $u$ sufficiently small). By a further scaling of the $\mathbf{A}_u$ we may assume in addition that
$\det (M_u) = 1.$ This we do merely for convenience. As we shall see, the family of gaussians
$$
e^{-\pi\langle M_u x,x\rangle}
$$
(after suitable truncations) will be our main tool in passing from scale $\delta^\alpha$ to scale $\delta$ in Proposition \ref{prop:Csimple}. To achieve the required localisation at multiple scales, we will need to make use of scalings of $\mathbf{g}_u$, denoted $\mathbf{g}_{u,\rho}$, where
\[ g_{u,\rho,j}(x)=\rho^{-n_j}g_{u,j}(x/\rho).\]
Here $\rho>0$, and we have normalised so that $\int g_{u,\rho,j}=1$.


It will be convenient to introduce certain constants related to $\mathbf{A}_u$. The first is
\begin{equation}\label{eq:barc0}
\bar C_0 := \sup_{u}\|M_u^{-1/2}\|,
\end{equation}
where the supremum is taken over $u$ belonging to some suitably small (compact) neighbourhood of the origin in $\mathbb{R}^n$.
As $M_u$ is positive definite and varies continuously in $u$, we have $\bar C_0 \simeq 1$. The second constant that arises is
\begin{equation}\label{eq:c0}
C_0 = \sup_{u}\max_{j=1,\ldots,m} \|A_{u,j}^{1/2}\|.
\end{equation}
By similar regularity considerations, we also have $C_0 \simeq 1$.


We conclude this section with three technical lemmas concerning the above gaussian extremisers which feed into the proof of Proposition \ref{prop:Csimple} in the next section.


The first of the lemmas states that we can truncate the product of our gaussian extremisers to a suitable compact set without losing too much. This is tantamount to observing that gaussian extremisers become suitable near-extremisers if truncated appropriately.
\begin{lemma}\label{lem:truncation}
For each $\eta>0$ there exists $\nu>0$ such that 
\begin{align*}
\int_{\R^n}\prod_{j=1}^m g_{u,\delta,j}^{p_j}\circ dB_{j}(u)
\leq (1+\delta^{\eta}) \int_{U_\delta(0)}\prod_{j=1}^m g_{u,\delta,j}^{p_j}\circ  dB_{j}(u),
\end{align*}
for all $u\in U_\nu(0)$ and $0<\delta\leq\nu$.
\end{lemma}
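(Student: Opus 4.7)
\bigskip

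\noindent\textbf{Proof plan for Lemma \ref{lem:truncation}.} The plan is to reduce the statement to a standard gaussian tail bound, using the identity \eqref{maingaus} together with the explicit scaling. First, since $g_{u,\delta,j}(x)=\delta^{-n_j}g_{u,j}(x/\delta)$, applying \eqref{maingaus} with $x$ replaced by $x/\delta$ and using both the scaling relation $n=\sum_j p_j n_j$ and the normalisation $\det M_u=1$ gives
\[
\prod_{j=1}^m g_{u,\delta,j}(dB_j(u)x)^{p_j}=\delta^{-n}\,\bl(\mathrm{d}\mathbf{B}(u),\mathbf{p})\,e^{-\pi\langle M_u x,x\rangle/\delta^2}.
\]
Hence the desired inequality is equivalent to
\[
\int_{\mathbb{R}^n} e^{-\pi\langle M_u x,x\rangle/\delta^2}\,dx\leq (1+\delta^\eta)\int_{U_\delta(0)}e^{-\pi\langle M_u x,x\rangle/\delta^2}\,dx,
\]
and, after the change of variables $y=x/\delta$, equivalent to
\[
\int_{|y|>\log(1/\delta)} e^{-\pi\langle M_u y,y\rangle}\,dy\leq \delta^\eta\int_{|y|\leq \log(1/\delta)} e^{-\pi\langle M_u y,y\rangle}\,dy.
\]

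The next step is to control both sides uniformly in $u$ using the constant $\bar C_0$ of \eqref{eq:barc0}. Since $\|M_u^{-1/2}\|\leq \bar C_0$ uniformly in a neighbourhood of the origin, the smallest eigenvalue of $M_u$ satisfies $\lambda_{\min}(M_u)\geq \bar C_0^{-2}$, and therefore $\langle M_u y,y\rangle\geq \bar C_0^{-2}|y|^2$ for all $y$. A routine gaussian tail estimate then yields
\[
\int_{|y|>\log(1/\delta)} e^{-\pi\langle M_u y,y\rangle}\,dy\lesssim e^{-\pi\bar C_0^{-2}(\log(1/\delta))^2/2}=\delta^{c\log(1/\delta)}
\]
for some $c\simeq 1$. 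For the denominator, since $\det M_u=1$ we have $\int_{\mathbb{R}^n}e^{-\pi\langle M_u y,y\rangle}dy=1$, and since the tail just computed is $o(1)$ as $\delta\to 0$, the integral over $|y|\leq \log(1/\delta)$ is bounded below (by $\tfrac{1}{2}$, say) uniformly in $u$ and small $\delta$.

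Combining these two bounds gives a ratio of at most $C\delta^{c\log(1/\delta)}$, which is bounded by $\delta^\eta$ as soon as $c\log(1/\delta)\geq \eta$; this determines how small $\nu$ must be taken (depending on $\eta$). There is no serious obstacle: the only ingredient beyond elementary gaussian calculus is the uniform lower bound $\lambda_{\min}(M_u)\gtrsim 1$, which is precisely what the smooth dependence of $\mathbf{A}_u$ on $u$ (and hence the positive-definiteness of $M_u$) provides via \eqref{eq:barc0}. The logarithmic truncation in $U_\delta(0)$ is chosen exactly so that the super-polynomial gaussian decay produces a factor smaller than any positive power of $\delta$.
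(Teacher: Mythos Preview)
Your proposal is correct and follows essentially the same route as the paper's proof: both reduce via \eqref{maingaus} and rescaling to a gaussian tail estimate over $|y|>\log(1/\delta)$, controlled uniformly in $u$ through the constant $\bar C_0$ of \eqref{eq:barc0}, yielding super-polynomial decay $\delta^{c\log(1/\delta)}$. The only cosmetic difference is that the paper performs the substitution $x=M_u^{1/2}y$ to convert to a standard gaussian and then uses $|M_u^{-1/2}x|\leq\bar C_0|x|$ to enlarge the region of integration, whereas you bound the quadratic form directly via $\langle M_u y,y\rangle\geq\bar C_0^{-2}|y|^2$; these are equivalent manoeuvres.
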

\begin{proof}
Fix $\eta>0$. By elementary considerations it suffices to show that
\begin{equation} \label{e:chopgaus}
\int_{\R^n\backslash U_\delta(0)}\prod_{j=1}^m g_{u,\delta,j}^{p_j}\circ  dB_{j}(u)
\lesssim \delta^{2\eta}\int_{\R^n}\prod_{j=1}^m g_{u,\delta,j}^{p_j}\circ dB_{j}(u)
\end{equation}
for sufficiently small $\delta > 0$, depending on $\eta$. To establish \eqref{e:chopgaus} we first observe that the integrand is expressible as
\[
\prod_{j=1}^m g_{u,\delta,j}^{p_j}\circ  dB_{j}(u) = \delta^{-n} \mathrm{BL}(\mathbf{B},\mathbf{p}) e^{-\pi\left\langle M_u \frac{x}{\delta},\frac{x}{\delta}\right\rangle},
\]
which is easily verified using \eqref{maingaus}, the scaling condition \eqref{scal}, and our normalisation $\det M_u =1$. It is clear that \eqref{e:chopgaus} is equivalent to
$$
\int_{| M_u^{-1/2}x| \geq \log(1/\delta)}e^{-\pi|x|^2}dx \lesssim \delta^{2\eta}
$$
and, since we have $| M_u^{-1/2}x| \leq \bar C_0 |x|$ (for sufficiently small $u$), it suffices to show that
\begin{equation} \label{e:chopgausETS}
\int_{|x|\gtrsim \log(1/\delta)}e^{-\pi|x|^2}dx\lesssim \delta^{2\eta}
\end{equation}
for sufficiently small $\delta$.
To see this, we simply observe that
$$
\int_{|x|\gtrsim \log(1/\delta)}e^{-\pi|x|^2}dx\lesssim \int_{|x|\sim\log(1/\delta)}e^{-\pi|x|^2}dx\lesssim (\log(1/\delta))^ne^{-c(\log(1/\delta))^2}=(\log(1/\delta))^n\delta^{c\log(1/\delta)}
$$
provided $\delta$ is sufficiently small and $c \simeq 1$.
\end{proof}

The next technical lemma we require is the following, which morally says that gaussian extremisers are stable under perturbations of their centres by an amount which is small relative to their scale. Before providing the precise statement, we introduce the affine mappings $L_j^{u,y} : \mathbb{R}^n \to \mathbb{R}^{n_j}$ given by
\[
L^{u,y}_jx = L^u_jx - B_j(y) = B_j(u)+dB_j(u)(x-u) - B_j(y).
\]
\begin{lemma}\label{lem:perturb} There exists $\nu \simeq 1$ such that if $0<\delta<\nu$, then for $u\in U_{\nu}(0)$ and $y\in U_\delta(u)$,
\begin{align*}
\int_{U_{\delta^\alpha}(y)}\prod_{j=1}^m (g_{u,\delta^\alpha,j}^{p_j} \circ dB_j(u))(x-y)dx \leq (1+\delta^{\beta'}) \int_{U_{\delta^\alpha}(y)}\prod_{j=1}^m g_{u,\delta^\alpha,j}^{p_j} \circ L^{u,y}_j.
\end{align*}
\end{lemma}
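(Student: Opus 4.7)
The plan is to prove the inequality pointwise for $x \in U_{\delta^\alpha}(y)$ and then integrate. The crucial observation is that the two arguments of $g_{u,\delta^\alpha,j}$ appearing in the integrands differ only by a second-order Taylor remainder: writing
$$\Delta_j := B_j(y) - B_j(u) - dB_j(u)(y-u),$$
one checks directly that $L^{u,y}_j x = dB_j(u)(x-y) - \Delta_j$. By the $C^2$ hypothesis and \eqref{taylor}, $|\Delta_j| \lesssim |y-u|^2 \leq \delta^2 (\log(1/\delta))^2$, which is much smaller than the Gaussian scale $\delta^\alpha$ because $\alpha<2$. This smallness is the engine of the lemma.

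Using the explicit form \eqref{steve} together with the scaling $g_{u,\delta^\alpha,j}(z) = (\delta^\alpha)^{-n_j} g_{u,j}(z/\delta^\alpha)$, a direct computation yields the exact pointwise identity
$$\frac{\prod_{j=1}^m g_{u,\delta^\alpha,j}(dB_j(u)(x-y))^{p_j}}{\prod_{j=1}^m g_{u,\delta^\alpha,j}(L^{u,y}_j x)^{p_j}} = \exp\Bigl(\frac{\pi}{\delta^{2\alpha}}\sum_{j=1}^m p_j\bigl[\langle A_{u,j}\Delta_j, \Delta_j\rangle - 2\langle A_{u,j}\,dB_j(u)(x-y),\Delta_j\rangle\bigr]\Bigr).$$

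For $x \in U_{\delta^\alpha}(y)$ we have $|x-y| \leq \delta^\alpha\log(1/\delta^\alpha)$, hence $|dB_j(u)(x-y)| \lesssim \delta^\alpha \log(1/\delta)$. Combining this with the uniform bound $\|A_{u,j}\| \lesssim 1$ from \eqref{eq:c0} and the above estimate on $|\Delta_j|$, the absolute value of the exponent is bounded by a constant times
$$\delta^{4-2\alpha}\bigl(\log(1/\delta)\bigr)^4 + \delta^{2-\alpha}\bigl(\log(1/\delta)\bigr)^3.$$
The second term dominates for small $\delta$, and since the standing assumption $\beta' < 2-\alpha$ leaves a strictly positive gap $2-\alpha-\beta' > 0$ to absorb the polylogarithmic factors, one can choose $\nu \simeq 1$ small enough that this quantity is at most $\log(1+\delta^{\beta'})$ uniformly in $\delta \leq \nu$. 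The desired pointwise bound
$$\prod_{j=1}^m g_{u,\delta^\alpha,j}(dB_j(u)(x-y))^{p_j} \leq (1+\delta^{\beta'})\prod_{j=1}^m g_{u,\delta^\alpha,j}(L^{u,y}_j x)^{p_j}$$
then follows, and integration over $U_{\delta^\alpha}(y)$ produces the claimed inequality.

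The only genuine obstacle is careful bookkeeping of the interacting scales --- the separation $|y-u| \lesssim \delta\log(1/\delta)$, the Taylor remainder $|\Delta_j|\lesssim \delta^2(\log(1/\delta))^2$, the local displacement $|x-y|\lesssim \delta^\alpha\log(1/\delta)$, and the Gaussian scale $\delta^\alpha$ --- together with verifying that the logarithmic losses created by working on truncated Gaussian supports are swallowed by the strict inequality $\beta' < 2-\alpha$. This is precisely the role of the slightly enlarged auxiliary parameter $\beta' \in (\beta, 2-\alpha)$ introduced in Section~\ref{sect:settingup}.
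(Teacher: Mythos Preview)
Your argument is correct, and it is in fact more direct than the paper's own proof. The paper does not prove the inequality pointwise; instead it defines $G(w)=\prod_j g_{u,\delta^\alpha,j}(w_j)^{p_j}$, bounds $\|G\circ L - G\circ\widetilde L\|_{L^1(U_{\delta^\alpha}(y))}$ via the mean value theorem and the gradient estimate $|\nabla G(w)|\lesssim\delta^{-\alpha(n+2)}|w|$, and obtains only an \emph{additive} error $\delta^\gamma$ with $\beta'<\gamma<2-\alpha$. It then needs a separate bootstrapping step, appealing to Proposition~\ref{prop:BLcts} and Lemma~\ref{lem:truncation} to establish a uniform lower bound on the left-hand side, in order to convert the additive error into the multiplicative factor $(1+\delta^{\beta'})$. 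Your route---writing the ratio of integrands exactly as an exponential and bounding the exponent---exploits the explicit gaussian form of the $g_{u,\delta^\alpha,j}$ and bypasses this bootstrapping entirely. Both arguments ultimately rest on the same scale separation $|\Delta_j|\lesssim\delta^2(\log(1/\delta))^2\ll\delta^\alpha$ and the strict inequality $\beta'<2-\alpha$; yours simply uses them more efficiently here, while the paper's approach would generalise to non-gaussian localisers satisfying only a gradient bound.
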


\begin{proof} Let $\nu\simeq 1$ be a small constant to be chosen below. Fix $\delta$  such that $0 < \delta < \nu$, $u\in U_{\nu}(0)$, and $y\in U_\delta(u)$. It then suffices to prove, for any $\gamma$ satisfying $\beta' < \gamma < 2 - \alpha$, that
\begin{equation}\label{beforebootstraping}
\int_{U_{\delta^\alpha}(y)}\prod_{j=1}^m (g_{u,\delta^\alpha,j}^{p_j} \circ dB_j(u))(x-y)dx \leq \delta^{\gamma} + \int_{U_{\delta^\alpha}(y)}\prod_{j=1}^m g_{u,\delta^\alpha,j}^{p_j} \circ L^{u,y}_j.
\end{equation}
as we may then obtain the desired estimate using a bootstrapping argument. 
Indeed, by Proposition \ref{prop:BLcts} and Lemma \ref{lem:truncation},
\[
{\mbox{BL}(\mathrm{d}{\bf B}(0), {\bf p})}\leq 2{\mbox{BL}(\mathrm{d}{\bf B}(u), {\bf p})}
 \leq 4\int_{U_{\delta^\alpha}(y)}\prod_{j=1}^m (g_{u,\delta^\alpha,j}^{p_j} \circ dB_j(u))(x-y)dx
\]
for $u\in U_{\nu}(0)$, provided $\nu$ is sufficiently small. Therefore \eqref{beforebootstraping} implies
\[
\left(1-\frac{4\delta^{\gamma}}{{\mbox{BL}(\mathrm{d}{\bf B}(0), {\bf p})}}\right)
\int_{U_{\delta^\alpha}(y)}\prod_{j=1}^m (g_{u,\delta^\alpha,j}^{p_j} \circ dB_j(u))(x-y)dx  \nonumber\\
\leq \int_{U_{\delta^\alpha}(y)}\prod_{j=1}^m g_{u,\delta^\alpha,j}^{p_j} \circ L^{u,y}_j,
\]
and the result quickly follows, provided $\nu \simeq 1$ is sufficiently small.

In order to show \eqref{beforebootstraping}, we write $\ell = \sum_{j=1}^m n_j$ and, identifying $\mathbb{R}^{n_1} \times \cdots \times \mathbb{R}^{n_m}$ with $\mathbb{R}^\ell$, we define
\[
G(w) = \prod_{j=1}^m g_{u,\delta^\alpha,j}(w_j)^{p_j}
\]
for $w \in \mathbb{R}^\ell$. Here, and for the remainder of the proof, we suppress the dependence on $u$ and $y$. Thus, \eqref{beforebootstraping} follows once we show
\begin{equation} \label{e:perturbETS}
\|G \circ L -  G \circ \widetilde{L}\|_{L^1(U_{\delta^\alpha}(y))} \leq \delta^{\gamma},
\end{equation}
where $L_j = L^{u,y}_j$ and $\widetilde{L}_j = dB_j(u)(\cdot-y)$, and we shall obtain this using an appropriate uniform bound on the integrand. For this, first we observe the estimate
\[
|\nabla G(w)| \lesssim \delta^{-\alpha(n+2)} |w|.
\]
This follows from $\prod_{j=1}^m c_{u,j}^{p_j} \lesssim 1$, the fact that  $C_0 \simeq 1$, where $C_0$ is given by \eqref{eq:c0}, and using the scaling condition \eqref{scal} to collect powers of $\delta$. Since we also have
\[
|L_jx - \widetilde{L}_jx| \lesssim \delta^2 \left(\log \frac{1}{\delta}\right)^2
\]
from \eqref{taylor}, the mean value theorem implies
\[
\|G \circ L -  G \circ \widetilde{L}\|_{L^\infty(U_{\delta^\alpha}(y))} \lesssim \delta^{-\alpha(n+2)} \delta^{\alpha +2} \left(\log \frac{1}{\delta}\right)^3
\]
and hence
\[
\|G \circ L -  G \circ \widetilde{L}\|_{L^1(U_{\delta^\alpha}(y))}  \lesssim \delta^{2-\alpha}  \left(\log \frac{1}{\delta}\right)^{n+3}.
\]
Clearly we obtain \eqref{e:perturbETS}, provided $\nu \simeq 1$ is sufficiently small, and this completes the proof.
\end{proof}

The final lemma is concerned with ``local constancy" of gaussians. Whilst gaussians are not $\kappa$-constant at scale $\mu$ on $\R^n$ for any choice of $\kappa$ and $\mu$, there exists $\kappa$ such that they are $\kappa$-constant at scale $\mu$ on the truncated neighbourhoods under consideration.
\begin{lemma}\label{lem:gaussconstant}
There exists $\nu \simeq 1$ such that the following holds: for all $\delta \in (0,\nu)$ and $\mu>0$ satisfying the threshold condition $\delta^{\alpha+\beta'}>\mu$, and for all $u \in U_{\nu}(0)$ and $x\in 2U_\delta(u)$, the function $g_{u,\delta^\alpha,j}(L^u_jx-\cdot)$ is $\exp(\delta^\beta)$-constant at scale $\mu$ on $B_j(2U_{\delta^\alpha}(x))$.
\end{lemma}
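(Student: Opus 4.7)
The plan is to compute the log-ratio of $g_{u,\delta^\alpha,j}(L^u_j x - w)$ at two nearby points and bound it by $\delta^\beta$. Fix $w, w'$ with $w \in B_j(2U_{\delta^\alpha}(x))$ and $|w - w'| \leq \mu$, and set $a = L^u_j x - w$, $b = L^u_j x - w'$, so $|a - b| \leq \mu$. Using the explicit Gaussian form \eqref{steve} together with the normalisation conventions for $\mathbf{A}_u$ and the symmetry of $A_{u,j}$, the log-ratio $\log(g_{u,\delta^\alpha,j}(L^u_j x - w)/g_{u,\delta^\alpha,j}(L^u_j x - w'))$ equals $\pi\delta^{-2\alpha}\langle A_{u,j}(b-a),\, a+b\rangle$. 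Since $|b - a| \leq \mu$ and $|b| \leq |a| + \mu$, controlling this reduces to bounding $|a|$.

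The key estimate I would establish first is $|a| \lesssim \delta^\alpha \log(1/\delta)$. Writing $w = B_j(y)$ for some $y \in 2U_{\delta^\alpha}(x)$ and decomposing $L^u_j x - B_j(y) = dB_j(u)(x - y) + (L^u_j y - B_j(y))$, the first term is $\lesssim \|dB_j(u)\|\,|x - y| \lesssim \delta^\alpha \log(1/\delta)$, while the Taylor bound \eqref{taylor} gives $|L^u_j y - B_j(y)| \lesssim |y - u|^2 \lesssim \delta^2(\log(1/\delta))^2$ (using $|y - u| \leq |y - x| + |x - u| \lesssim \delta \log(1/\delta)$, since $x \in 2U_\delta(u)$ and $\alpha > 1$). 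The condition $\alpha + \beta < 2$ ensures $\alpha < 2$, so the first term dominates for $\nu \simeq 1$ small enough.

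Combining this with $\|A_{u,j}\| \lesssim 1$ uniformly in $u$ (via the constant $C_0$ from \eqref{eq:c0}) yields an upper bound on the absolute value of the log-ratio of order $\delta^{-2\alpha}\mu \cdot \delta^\alpha \log(1/\delta) = \delta^{-\alpha}\mu\log(1/\delta)$. Inserting the threshold hypothesis $\mu < \delta^{\alpha + \beta'}$ gives an upper bound $\lesssim \delta^{\beta'}\log(1/\delta)$, which in turn is bounded by $\delta^\beta$ for sufficiently small $\delta$, since $\beta < \beta'$ forces $\delta^{\beta' - \beta}\log(1/\delta) \to 0$. This delivers the claimed $\exp(\delta^\beta)$-constancy.

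The main obstacle will be the careful bookkeeping of the competing scales $\delta$, $\delta^\alpha$, and $\mu$, together with verifying that the implicit constants are uniform in $u$ near $0$; this uniformity follows from the smooth dependence of $\mathbf{A}_u$ on $u$ recorded at the start of this section. As with Lemma \ref{lem:perturb}, it is the strict inequality $\beta < \beta'$ that is used to absorb the logarithmic factor arising from the truncation scale $\delta \log(1/\delta)$ in the definition of $U_\delta$.
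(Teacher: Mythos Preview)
Your proof is correct and follows essentially the same route as the paper's: both arguments bound $|L^u_jx-w|\lesssim\delta^\alpha\log(1/\delta)$ via the Taylor estimate \eqref{taylor}, obtain a log-ratio of order $\delta^{-\alpha}\mu\log(1/\delta)$ from the explicit Gaussian form, and then invoke the threshold $\mu<\delta^{\alpha+\beta'}$ together with $\beta<\beta'$ to absorb the logarithm. The only cosmetic difference is that the paper first translates to reduce to showing $g_{u,\delta^\alpha,j}$ itself is $\exp(\delta^\beta)$-constant on $CU_{\delta^\alpha}(0)$ before carrying out the same estimate.
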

\begin{proof}
It suffices to show that, given any $C \simeq 1$, the function $g_{u,\delta^\alpha,j}$ is $\exp(\delta^\beta)$-constant at scale $\mu$ on $CU_{\delta^\alpha}(0)$, where $u \in U_{\nu}(0)$, for an appropriate choice of $\nu \simeq 1$, and $\delta \in (0,\nu)$ and $\mu>0$ satisfing the threshold condition $\delta^{\alpha+\beta'}>\mu$. Indeed, $g_{u,\delta^\alpha,j}(L^u_jx-\cdot)$ is simply a translation of $g_{u,\delta^\alpha,j}$, so to obtain the desired claim, it is only necessary to check that $L_j^ux - w \in CU_{\delta^\alpha}(0)$ for some $C \simeq 1$ whenever $w \in B_j(2U_{\delta^\alpha}(x))$. This follows since, given such $w \in B_j(2U_{\delta^\alpha}(x))$ and writing $w = B_j(y)$ for some $y \in  2U_{\delta^\alpha}(x)$, we may use \eqref{taylor} to obtain
\[
|L^u_jx - w| \lesssim |x-u|^2 + |B_j(x) - B_j(y)| \lesssim \delta^2 \left(\log \frac{1}{\delta}\right)^2 + \delta^\alpha \log \frac{1}{\delta^\alpha} \sim\delta^\alpha \log \frac{1}{\delta^\alpha}.
\]
To establish that $g_{u,\delta^\alpha,j}$ is $\exp(\delta^\beta)$-constant at scale $\mu$ on $CU_{\delta^\alpha}(0)$, we first observe that, for any $R \geq 1$, $\widetilde{\mu} \leq R$, $|x| \leq R$ and $|y| \leq \widetilde{\mu}$, we have
\[
g_{u,j}(x) \leq \exp(3\pi R\|A_{u,j}\|\widetilde{\mu}) g_{u,j}(x+y)
\]
by elementary considerations. Applying this fact with $R = C\alpha\log(1/\delta)$ and $\widetilde{\mu} = \delta^{-\alpha}\mu$, we obtain that $g_{u,\delta^\alpha,j}$ is $\exp(\eta)$-constant at scale $\mu$ on $CU_{\delta^\alpha}(0)$, where $\eta = 3\pi C\alpha\|A_{u,j}\|\mu\delta^{-\alpha}\log(1/\delta)$. In order to use the preceding observation, we should ensure $\widetilde{\mu} \leq R$; this is simply a consequence of the fact that $\beta'$ is positive (and provided $\nu$ is sufficiently small). Moreover, under the threshold condition we have $\delta^{\alpha+\beta'}>\mu$, and for $u \in U_{\nu}(0)$, we have
$\eta \lesssim \delta^{\beta'}\log(1/\delta)\lesssim \delta^\beta\nu^{(\beta'-\beta)/2}$.
By making $\nu \simeq 1$ small enough,
 it follows that $\eta \leq \delta^\beta$. This establishes that $g_{u,\delta^\alpha,j}$ is $\exp(\delta^\beta)$-constant at scale $\mu$ on $CU_{\delta^\alpha}(0)$, for any $u \in U_{\nu}(0)$, and thus completes the proof of the lemma.
\end{proof}


\section{The proof of Proposition \ref{prop:Csimple}}\label{sect:step}
The main idea in our proof of Proposition \ref{prop:Csimple} is that it is possible to run a certain nonlinear variant of the argument leading to \eqref{Ball2}. In doing so, we first claim that it is enough to prove that there exists $\nu \simeq 1$ such that if $u\in U_{\nu}(0)$ and $\delta \in (0,\nu)$ satisfies $\delta^{\alpha+\beta'}>\mu$, then
\begin{equation}\label{inductionstep'}
\mathcal{C}(u,\delta,\mu,\kappa) \leq (1 + \delta^{\beta'})^2 \max_{x\in 2U_\delta(u)} \mathcal{C}(x,\delta^\alpha,\mu,\kappa\exp(\delta^\beta)).
\end{equation}
Clearly the desired inequality in \eqref{inductionstep} follows from \eqref{inductionstep'} by choosing $\nu \simeq 1$ sufficiently small. Our proof appeals to Lemmas \ref{lem:truncation}--\ref{lem:gaussconstant} in sequence, with each implicitly imposing its own smallness requirement on $\nu$.
\begin{proof}[Proof of \eqref{inductionstep'}]
We fix $u \in U_{\nu}(0)$, and assume $\delta \in (0,\nu)$ and $\mu$ satisfy the threshold condition $\delta^{\alpha+\beta'}>\mu$. We also fix $f_j\in L^1(B_j(2U_\delta(u));\mu,\kappa)$ and, to shorten some forthcoming expressions, we write $F = \prod_{j=1}^m (f_j \circ B_j)^{p_j}$.

By \eqref{brian}, and elementary scaling, we have
\begin{equation}\label{brianscal}
\mbox{BL}(\mbox{d}\mathbf{B}(u),\mathbf{p};\mathbf{g}_{u,\delta^\alpha})=\mbox{BL}(\mbox{d}\mathbf{B}(u),\mathbf{p}),
\end{equation}
and so
$$
\int_{U_\delta(u)} F
= \frac{1}{\mbox{BL}(\mathrm{d}{\bf B}(u), {\bf p})} \int_{U_\delta(u)} F(y) \int_{\R^n}\prod_{j=1}^m g_{u,\delta^\alpha,j}^{p_j}(dB_j(u)x) dxdy.
$$
By Lemma \ref{lem:truncation} with $\eta=\beta'/\alpha$, followed by a translation change of variables in $x$, we obtain
\begin{equation*}
\int_{U_\delta(u)} F
\leq \frac{1+\delta^{\beta'}}{\mbox{BL}(\mathrm{d}{\bf B}(u), {\bf p})} \int_{U_\delta(u)}F(y) \int_{U_{\delta^\alpha}(y)}\prod_{j=1}^m g_{u,\delta^\alpha,j}^{p_j}(dB_j(u)(x-y))  dxdy.
\end{equation*}
Now we may apply Lemma \ref{lem:perturb}, and then Fubini's theorem, to obtain
\begin{align*}
\int_{U_\delta(u)}F & \leq \frac{(1+ \delta^{\beta'})^2}{\mbox{BL}(\mathrm{d}{\bf B}(u), {\bf p})}  \int_{ U_\delta(u)}\int_{U_{\delta^\alpha}(y)}F(y) \prod_{j=1}^m g_{u,\delta^\alpha,j}(L^{u,y}_jx)^{p_j}dxdy \\
& \leq \frac{(1+ \delta^{\beta'})^2}{\mbox{BL}(\mathrm{d}{\bf B}(u), {\bf p})}  \int_{U_{\delta}(u)+U_{\delta^\alpha}(0)} \int_{U_{\delta^\alpha}(x)} F(y) \prod_{j=1}^m g_{u,\delta^\alpha,j}(L^{u,y}_jx)^{p_j}dydx.
\end{align*}
Here, we recall that $L^{u,y}_jx$ is the affine map given by
\[
L^{u,y}_jx = L^u_jx - B_j(y) = B_j(u)+dB_j(u)(x-u) - B_j(y).
\]
The inner integral is thus,
\[
\int_{U_{\delta^\alpha}(x)}\prod_{j=1}^m \left(h_j^x\circ B_j\right)^{p_j}
\]
where
\[
h_j^x(w) = f_j(w)g_{u,\delta^\alpha,j}(L^u_jx - w).
\]
By assumption, $f_j\in L^1(B_j(2U_\delta(u));\mu,\kappa)$ and, for $x\in U_\delta(u)+U_{\delta^\alpha}(0)$, we have
$2U_{\delta^\alpha}(x) \subset 2U_\delta(u)$. Hence, $f_j\in L^1(B_j(2U_{\delta^\alpha}(x));\mu,\kappa).$  Moreover, according to Lemma \ref{lem:gaussconstant}, we have $$g_{u,\delta^\alpha,j}(L^u_jx-\cdot)\in L^1(B_j(2U_{\delta^\alpha}(x));\mu,\exp(\delta^\beta)),$$ and therefore
\[
h_j^x\in L^1(B_j(2U_{\delta^\alpha}(x));\mu,\kappa\exp(\delta^\beta))
\]
whenever $x\in U_\delta(u)+U_{\delta^\alpha}(0)$.

By the definition of $\mathcal{C}(x,\delta^\alpha,\mu,\kappa\exp(\delta^\beta))$, and slightly enlarging the domain of $x$,
\begin{align*}
\int_{U_\delta(u)}F  & \leq \frac{(1+ \delta^{\beta'})^2\max_{x \in 2U_{\delta}(u)} \mathcal{C}(x,\delta^\alpha,\mu,\kappa\exp(\delta^\beta))}{\mbox{BL}(\mathrm{d}{\bf B}(u), {\bf p})}  \int_{2U_{\delta}(u)} \prod_{j=1}^m \left(\int_{\R^{n_j}} h_j^x\right)^{p_j}dx \\
& = \frac{(1+ \delta^{\beta'})^2\max_{x \in 2U_{\delta}(u)} \mathcal{C}(x,\delta^\alpha,\mu,\kappa\exp(\delta^\beta))}{\mbox{BL}(\mathrm{d}{\bf B}(u), {\bf p})}  \int_{2U_{\delta}(u)} \prod_{j=1}^m (f_j*g_{u,\delta^\alpha,j})^{p_j} \circ L^u_j.
\end{align*}
To conclude, we simply apply the linear inequality and the fact that the gaussians $g_{u,\delta^\alpha,j}$ are normalised in $L^1$ to obtain
\begin{align*}
\int_{U_\delta(u)} F & \leq (1+ \delta^{\beta'})^2\max_{x \in 2U_{\delta}(u)} \mathcal{C}(x,\delta^\alpha,\mu,\kappa\exp(\delta^\beta)) \prod_{j=1}^m \left(\int_{\R^{n_j}}f_j*g_{u,\delta^\alpha,j}(x+B_j(u))dx\right)^{p_j} \\
& =  (1+ \delta^{\beta'})^2 \max_{x\in 2U_\delta(u)} \mathcal{C}(x,\delta^\alpha,\mu,\kappa\exp(\delta^\beta))\prod_{j=1}^m \left(\int_{\R^{n_j}}f_j\right)^{p_j}
\end{align*}
and hence \eqref{inductionstep'}.
\end{proof}

\end{document}